\pdfoutput=1
\documentclass[11pt]{article}
\usepackage{amsmath,amsthm,amssymb}
\usepackage[margin=1.1in]{geometry}
\usepackage[hidelinks]{hyperref}
\usepackage{xcolor}

\newtheorem{theorem}{Theorem}[section]
\newtheorem{lemma}[theorem]{Lemma}
\newtheorem{proposition}[theorem]{Proposition}
\newtheorem{corollary}[theorem]{Corollary}
\theoremstyle{definition}
\newtheorem{remark}[theorem]{Remark}

\newcommand{\Imlog}{\operatorname{Im}\log}
\DeclareMathOperator{\Real}{Re}
\DeclareMathOperator{\Imag}{Im}
\DeclareMathOperator{\meas}{meas}

\title{A Tsang-range high-moment bound for $\Imlog L(\tfrac{1}{2}+it,\chi)$ under GRH}
\author{Scott D.\ Hughes}
\date{June 7, 2026}

\begin{document}
\maketitle

\begin{abstract}
We prove, conditional on GRH for $L(s,\chi)$, the
Selberg--Tsang high-moment bound for
$X_\chi(t) = \Imlog L(\tfrac{1}{2}+it,\chi)$ at fixed squarefree odd
conductor $q \ge 3$ and primitive non-principal character $\chi$.
Writing $L_T = \log\log(qT)$, the main theorem
(Theorem~\ref{thm:M}, ``Hypothesis~(M)'') states that for every $K > 0$
there exist $C_K = C_{K,q,\chi}$ and $T_0 = T_0(K,q,\chi)$ such that
\[
  \frac{1}{T}\int_T^{2T} |X_\chi(t)|^{2k}\, dt
  \;\le\; (C_K\, k\, L_T)^k
\]
for all $T \ge T_0$ and every integer $1 \le k \le K L_T$.

The proof ports Selberg's pointwise approximate formula for $S(t)$
(\cite{Selberg1946} \S 4, Theorem~2; see \cite{Goldston2004} \S 10 for
a modern exposition) to $L(s,\chi)$ at fixed $q$ under GRH.  The
$L$-version of Selberg's formula yields a clean three-piece
decomposition $X_\chi(t) = \Imag\,\mathcal{D}_\chi(t) + R_\chi(t,x)$,
where $\mathcal{D}_\chi$ is a complex prime-power Dirichlet polynomial
of length $< x^3$ and $|R_\chi(t,x)|$ is controlled by an unsigned
auxiliary polynomial $\mathcal{Z}_\chi(t)$ and a deterministic
$O_q(\log(qT)/\log x)$ error.  Applying the elementary inequality
$|a+b+c|^{2k} \le 3^{2k}(|a|^{2k}+|b|^{2k}+|c|^{2k})$ and bounding the
moments of $\mathcal{D}_\chi$, $\mathcal{Z}_\chi$ via
Soundararajan's mean-value lemma (\cite{Soundararajan2009} Lemma~3)
with complex character coefficients, together with the linear change
of variable $u = 2t$ for the prime-square contribution, gives the
moment bound.

As a corollary, Markov's inequality applied to the moment bound yields
a Gaussian-scale tail estimate
\[
  \frac{1}{T}\meas\bigl\{t \in [T, 2T] : |X_\chi(t)| > V\bigr\}
   \;\ll\; \exp\!\bigl(-c\,V^2/L_T\bigr)
\]
for $\sqrt{L_T} \ll V \ll L_T$
(Corollary~\ref{cor:gaussian-tail}).  This gives a GRH-conditional
imaginary-part Dirichlet-$L$ analogue, at fixed conductor, of the
large-deviation upper-bound direction studied by
Arguin--Bailey~\cite{ArguinBailey2022} for $\log|\zeta(\tfrac12 +
it)|$.  The present result has the correct Gaussian scale
$V^2/L_T$, but does not address sharp constants or the
$1/\sqrt{L_T}$ prefactor.

This is the assertion referred to as Hypothesis~(M) in related work;
we adopt that terminology.
\end{abstract}

\section{Introduction, target theorem, and proof architecture}
\label{sec:intro}

\subsection{Standing setup}

Throughout this note, $q \ge 3$ is a fixed squarefree odd integer
and $\chi$ is a primitive non-principal Dirichlet character modulo $q$.
All implicit constants are allowed to depend on $q, \chi$; we
sometimes emphasise this dependence in the proofs but suppress it
from theorem statements, since $q, \chi$ are fixed.  For
$t \in \mathbb{R}$ with $L(\tfrac{1}{2}+it,\chi) \ne 0$, write
\begin{equation}\label{eq:Xdef}
  X_\chi(t) \;:=\; \Imlog L(\tfrac{1}{2}+it, \chi),
\end{equation}
using the standard continuous-branch convention: starting from the
principal branch at $\sigma = +\infty$, we analytically continue
$\log L(s,\chi)$ along the horizontal line $\Imag s = t$ down to
$\sigma = \tfrac{1}{2}$.  At each nontrivial zero $\rho =
\tfrac{1}{2} + i\gamma$ encountered on the line $\Imag s = t$ (i.e.\
when $\gamma = t$), the analytic continuation incurs a jump of
$\pm\pi\,m_\rho$ in $\Imag\log L$, where $m_\rho \ge 1$ is the
multiplicity of $\rho$ and the sign is fixed by the local
sign convention for $\Imag\log L$ across the zero.  Since the set
$\{t : L(\tfrac12+it,\chi) = 0\}$ has measure zero, the values of
$X_\chi$ at zeros are irrelevant for the moment integrals below;
no pointwise bound for $X_\chi$ is used in the proof.  Set
\begin{equation}\label{eq:LTdef}
  L_T \;:=\; \log\log(qT),
\end{equation}
which is the natural Selberg variance scale for logarithms of
Dirichlet $L$-functions; for background on Selberg-type Gaussian
behaviour of $\log L(\tfrac12+it,\chi)$ at fixed primitive $\chi$
with variance scale $\tfrac12\log\log T$, see
Hsu--Wong~\cite{HW2020}.  This CLT context is not used in the proof
of Theorem~\ref{thm:M}; only the variance scale notation is
borrowed.  We write
$S_\chi(t) := \pi^{-1} X_\chi(t)$ for the Dirichlet-$L$ analog of
Selberg's $S(t)$; the factor of $\pi$ is absorbed into constants
throughout.

\subsection{Target theorem}

The present note proves the following result.

\begin{theorem}[GRH-conditional Tsang-range high-moment bound]\label{thm:M}
Let $\chi$ be a primitive non-principal Dirichlet character modulo
squarefree odd $q \ge 3$, $q$ fixed.  Assume GRH for $L(s, \chi)$.
Put $X_\chi(t) := \Imag\log L(1/2+it, \chi)$ and $L_T := \log\log(qT)$.
For every $K > 0$ there exist a constant $C_K = C_{K,q,\chi}$
and a threshold $T_0 = T_0(K, q, \chi) \ge 3$ such that, for every
$T \ge T_0$ and every integer $k$ with $1 \le k \le K L_T$,
\[
  \frac{1}{T}\int_T^{2T} |X_\chi(t)|^{2k}\, dt
  \;\le\; (C_K\, k\, L_T)^k.
\]
\end{theorem}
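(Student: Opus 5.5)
The plan is to port Selberg's pointwise approximate formula for $S(t)$ to $L(s,\chi)$ under GRH, and then bound moments of the resulting Dirichlet polynomials with Soundararajan's mean-value lemma. The main parameter is a length $x = (qT)^{c/k}$ with a small absolute $c>0$, so that $\log(qT)/\log x \asymp k$.

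\textbf{Step 1: a Selberg-type formula.} Fix $2 \le x \le (qT)^{1/2}$ and set $\sigma_x = \tfrac12 + 1/\log x$. Using the explicit formula for $L'/L(s,\chi)$ with Selberg's smoothed weights $\Lambda_x(n)$, GRH for $L(s,\chi)$, and integration of $\Imag L'/L$ from $\sigma=\infty$ down to $\tfrac12$, I expect to obtain
\[
  X_\chi(t) = -\Imag\!\!\sum_{n< x^3}\frac{\chi(n)\Lambda_x(n)}{n^{\sigma_x+it}\log n} + R_\chi(t,x).
\]
I will follow Selberg's \S4 argument, with the functional equation of the completed $L(s,\chi)$ replacing that of $\xi$. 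The remainder will satisfy
\[
  |R_\chi(t,x)| \ll \bigl|\mathcal{Z}_\chi(t)\bigr| + \frac{\log(qT)}{\log x},
  \qquad
  \mathcal{Z}_\chi(t)=\frac{1}{\log x}\sum_{n<x^3}\frac{\chi(n)\Lambda_x(n)}{n^{\sigma_x+it}}.
\]
To get this, the term $\sum_\rho$ over zeros near $t$ is bounded by $(\sigma_x-\tfrac12)\sum_\rho \frac{\sigma_x-1/2}{(\sigma_x-1/2)^2+(t-\gamma)^2}$. That quantity is in turn controlled by $\Real L'/L(\sigma_x+it)$, which the explicit formula expresses as $\mathcal{Z}$-type sums plus $O(\log(qT))$. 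The archimedean terms contribute only $O_q(\log(qT))$ and are absorbed, since $q$ is fixed.

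\textbf{Step 2: splitting and moment bounds.} Split $\mathcal{D}_\chi$ into its prime part $P(t)=\sum_{p<x^3}\chi(p)p^{-\sigma_x-it}(\ldots)$, its prime-square part $\sum_{p<x^{3/2}}\chi(p)^2 p^{-2\sigma_x-2it}/2$, and the higher powers, which form an absolutely convergent $O(1)$ term. Apply $|a+b+c|^{2k}\le 3^{2k}(\cdots)$ and treat each piece as follows.
\begin{itemize}
\item For the prime part, Soundararajan's Lemma~3 with complex coefficients $a(p)=\chi(p)w(p)$, $|a(p)|\le1$, applies because $x^{3k}\le T$ once $c$ is small. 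It gives $\frac1T\int_T^{2T}|P|^{2k}\ll k!\,(\sum_{p<x^3}|a(p)|^2/p)^k\ll (Ck L_T)^k$, using $\sum_{p\le x^3}1/p\le L_T+O(1)$. The imaginary part is dominated by the modulus.
\item For the prime-square part, substitute $u=2t$. It becomes a prime polynomial in $u$ over $[2T,4T]$ with coefficients $\chi(p)^2/(2p^{1/2})$, whose $2k$-th moment is $\ll k!\,C^k$, which is harmless.
\item For $\mathcal{Z}_\chi$, the prime coefficients are $\chi(p)\Lambda_x(p)/\log x$ at $p^{\sigma_x}$. The sum $\sum_p \Lambda_x(p)^2/(p^{1+2/\log x}\log^2 x)$ is $O(1)$ by partial summation. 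Hence its moment is $\ll (Ck)^k$, plus square-prime parts handled as above.
\item The deterministic error has $2k$-th power $(C\log(qT)/\log x)^{2k}=(C'k)^{2k}$. Since $k\le KL_T$, we have $(C'k)^{2k}\le (C'^2K\,kL_T)^k$. This is exactly where the constant's dependence on $K$ enters.
\end{itemize}
Summing the pieces gives $(C_K k L_T)^k$.

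\textbf{Main obstacle.} I expect Step 1 to be the hardest part: obtaining a remainder of the stated shape uniformly in $t\in[T,2T]$, with the zero sum controlled by a Dirichlet polynomial rather than pointwise, and doing so for the complex (non-real) coefficients and the $\chi$-dependent gamma factor. I will first try to copy Selberg's Lemma verbatim with $\Lambda(n)\mapsto\chi(n)\Lambda(n)$, checking that no step used real-valuedness of coefficients. A secondary check is the length constraint in Soundararajan's lemma, $x^{3k}\le T$. With $x=(qT)^{c/k}$ this holds for all $k\le KL_T$ once $T\ge T_0(K,q,\chi)$, which ensures $x\ge2$.
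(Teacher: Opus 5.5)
Your proposal is correct and follows the paper's proof essentially step for step. The shared ingredients are a GRH Selberg-type formula $X_\chi=\Imag\mathcal{D}_\chi+R_\chi$ with $|R_\chi|$ controlled by $(\log x)^{-1}\bigl|\sum_{n<x^3}\chi(n)\Lambda_x(n)n^{-\sigma_x-it}\bigr|+\log(qT)/\log x$, the $|a+b+c|^{2k}$ split, Soundararajan's Lemma~3 with complex coefficients for the prime parts, the substitution $u=2t$ for prime squares, and a choice $x\approx T^{c/k}$ so that the deterministic error becomes $(Ck)^{2k}\le(C^2K\,kL_T)^k$. The differences are cosmetic ($\sigma_x=\tfrac12+1/\log x$ instead of $\tfrac12+2/\log x$, and the factor $1/\log x$ built into your $\mathcal{Z}_\chi$ rather than carried as the prefactor $\sigma_x-\tfrac12$), apart from one sign slip that does not affect the $2k$-th moments: since $\log L(s,\chi)=\sum_n\chi(n)\Lambda(n)n^{-s}/\log n$, the main term should be $+\Imag\sum_{n<x^3}\chi(n)\Lambda_x(n)n^{-\sigma_x-it}/\log n$, not $-\Imag$.
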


\begin{remark}[GRH for $\bar\chi$]
For $\Re s>1$,
\[
        L(s,\bar\chi)
        =
        \sum_{n\ge1}\bar\chi(n)n^{-s}
        =
        \overline{\sum_{n\ge1}\chi(n)n^{-\bar s}}
        =
        \overline{L(\bar s,\chi)}.
\]
By analytic continuation, the identity holds throughout the plane.
Hence GRH for $L(s,\chi)$ is equivalent to GRH for $L(s,\bar\chi)$.
\end{remark}

This is the GRH-conditional rate: the unconditional rate on
$|X_\chi|^{2k}$ is $k^{4k}$ in the same scale, and the Tsang
refinement (Remark~2 after \cite{Tsang1984} Theorem~5.1) yields
$(ck)^{2k}$ under GRH, which is what we recover.  The convention
$X_\chi = \pi S_\chi$ contributes a $\pi^{2k}$ factor that is
absorbed into $C_K$.

\subsection{Proof architecture}

The proof ports Selberg's pointwise approximate formula for $S(t)$
(\cite{Selberg1946} \S 4, Theorem~2; see \cite{Goldston2004} \S 10
for a modern exposition under RH) to $L(s,\chi)$ at fixed $q$ under
GRH.  The prime-sum mean-value engine is
Soundararajan~\cite{Soundararajan2009} Lemma~3.  The fixed-$q$
Dirichlet-$L$ CLT of Hsu--Wong~\cite{HW2020} is contextual
background only; the present moment bound is a Tsang-range estimate
and uses neither a CLT input nor a CLT conclusion.  Writing
$\sigma_x = \tfrac12 + 2/\log x$, $\mathcal{D}_\chi(t) =
\sum_{n < x^3} \chi(n)\Lambda_x(n)/(n^{\sigma_x+it}\log n)$ for the
principal complex Dirichlet polynomial, and $\mathcal{Z}_\chi(t)
= \sum_{n \le x^3} \chi(n)\Lambda_x(n)/n^{\sigma_x+it}$ for the
auxiliary unweighted polynomial, the four steps are:
\begin{enumerate}
\item[(I)] \emph{Selberg's pointwise formula at fixed $q$ under GRH}
(Proposition~\ref{prop:selberg-pointwise-L}).  For $2 \le x \le T$
with $\log x \ge 4$ and $t \in [T, 2T]$,
\[
  X_\chi(t) \;=\; \Imag\,\mathcal{D}_\chi(t) \;+\; R_\chi(t, x),
\]
with the real remainder bounded by $|R_\chi(t,x)| \le
C_1(\sigma_x - \tfrac12)|\mathcal{Z}_\chi(t)| + C_2(q,\chi)\log(qT)/\log x$.
Steps~1--5 are derived explicitly: the estimate at $\sigma_x$
uses the GRH identity $|x^{\rho - \sigma_x - it}| = e^{-2}$.  Step~6
quotes the fixed-conductor strip-integration estimate of
Selberg~\cite{Selberg1946L} (Lemma~\ref{lem:selberg-strip}), with
the structural reasons for fixed-$q$ specialization recorded there.

\item[(II)] \emph{Three-term split via $|a+b+c|^{2k}$.}  The elementary
inequality $|a + b + c|^{2k} \le 3^{2k}(|a|^{2k} + |b|^{2k} + |c|^{2k})$
applied pointwise gives, for every integer $k \ge 1$,
\[
  |X_\chi(t)|^{2k}
   \;\le\; 3^{2k}\!\left(
     |\mathcal{D}_\chi(t)|^{2k}
     + (C_1(\sigma_x-\tfrac12))^{2k}|\mathcal{Z}_\chi(t)|^{2k}
     + (C_2(q,\chi)\log(qT)/\log x)^{2k}
   \right).
\]

\item[(III)] \emph{Moment bounds for $\mathcal{D}_\chi$ and
$\mathcal{Z}_\chi$.}  For any prime-power Dirichlet polynomial with
coefficients $|a(p^r)| \le C_0/(r p^{r/2})$, the high-moment lemma
(Lemma~\ref{lem:high-moment}) decomposes the polynomial into prime,
prime-square, and higher-prime-power parts; the prime contribution is
bounded by Sound's Lemma~3 with complex character coefficients, the
prime-square contribution reduces to a sum over primes by the
substitution $u = 2t$ (mapping $[T, 2T]$ to $[2T, 4T]$), and the
higher-prime-power contribution is pointwise bounded.  This gives
$T^{-1}\int_T^{2T}|\mathcal{A}(t)|^{2k} dt \le (C_1(C_0,K)\,k\,L_T)^k$
for length condition $y^k \le T/\log T$.  A parallel
log-weighted lemma (Lemma~\ref{lem:log-weighted-moment}) handles
$\mathcal{Z}_\chi$, whose coefficients carry an extra $\log p$ factor;
the resulting variance $(\log y)^2$ is exactly cancelled by the
$(2/\log x)^{2k}$ Selberg prefactor in step~(II).

\item[(IV)] \emph{Parameter choice and combination}
(\S\ref{sec:main-proof}).  Taking $x = T^{1/(8k)}$ makes
$x^{3k} = T^{3/8} \le T/\log T$, so Sound's length condition holds
with room to spare; combining the bounds from step~(III) with the
deterministic $C_2$-error gives Theorem~\ref{thm:M}.  As a corollary,
Markov's inequality applied to the moment bound yields the Gaussian
tail bound $\meas\{|X_\chi(t)| > V\}/T \le \exp(-cV^2/L_T)$ in the
Selberg range $\sqrt{L_T} \ll V \ll L_T$
(Corollary~\ref{cor:gaussian-tail}).
\end{enumerate}

The proof avoids the zero-moment iteration of Tsang~\cite{Tsang1984}.
An earlier version of this note used an integrated $S_1$-formula
analogous to that of Carneiro--Chandee--Milinovich~\cite{CCM2013};
the present version uses Selberg's pointwise formula directly in
step~(I) and is conditional only on GRH.

\subsection{Literature position}

This is the assertion referred to as Hypothesis~(M) in related work;
we adopt that terminology in the remainder of this paper.

Theorem~\ref{thm:M} should be viewed as a fixed-$q$ Tsang-range
moment estimate, rather than as a new central limit theorem.
For background on the fact that $\log L(\tfrac12+it,\chi)$ at fixed
primitive $\chi$ exhibits Selberg-type Gaussian behaviour with
variance scale $\tfrac12\log\log T$, see Hsu--Wong~\cite{HW2020},
whose Theorem~1.1 establishes the qualitative CLT for
$\log|L(\tfrac12+it,\chi)|$.  This CLT context is not invoked in the
proof: Theorem~\ref{thm:M} is a moment bound, not a CLT, and is
proved here directly from Selberg's pointwise formula and
Soundararajan's prime-sum mean-value lemma.  The contribution here
is the quantitative range $1 \le k \le K L_T$ in the $\Imag\log L$
moment bound under GRH alone: the bound $(C_K\,k\,L_T)^k$ holds up
to $k = K L_T$ (not merely $k$ bounded), with $C_K$ independent of
$T$ and $k$ for fixed $K, q, \chi$.

Arguin--Bailey~\cite{ArguinBailey2022} prove, unconditionally for
$\log|\zeta(\tfrac12+it)|$, the large-deviation upper bound
\[
  \frac{1}{T}\meas\bigl\{t \in [T, 2T] : \log|\zeta(\tfrac12+it)| > V\bigr\}
   \;\ll\; \frac{1}{\sqrt{\log\log T}}\,
        \exp\!\Bigl(-\frac{V^2}{\log\log T}\Bigr)
\]
for $V \sim \alpha\log\log T$, $0 < \alpha < 2$.  Our
Corollary~\ref{cor:gaussian-tail} is much more modest: it is
conditional on GRH, treats the imaginary part of the logarithm for
fixed primitive Dirichlet $L$-functions, and recovers only a
Gaussian-scale upper bound of the form $\exp(-cV^2/L_T)$.  It does
not recover the sharp prefactor or the sharp exponent constant.  A
recursive multi-scale treatment of $\Imag\log L$ at fixed $q$ would be
a separate problem, which we leave for future work.
Inoue~\cite{Inoue2024} provides finer asymptotics for the small-$V$
regime $V \ll (\log\log T)^{2/3}$ for $\zeta$ unconditionally; we do
not attempt to recover this regime here.

\medskip

The remainder of this note is organised as follows.
\S\ref{sec:setup} fixes notation and conventions.
\S\ref{sec:S1-reduction} contains background on the Riemann--von
Mangoldt zero-counting formula and Selberg's pointwise formula for
$L(s,\chi)$ under GRH (Proposition~\ref{prop:selberg-pointwise-L}).
\S\ref{sec:high-moment} states Sound's mean-value lemma
(Lemma~\ref{lem:sound-mean-value}), the high-moment lemma
(Lemma~\ref{lem:high-moment}), the log-weighted moment lemma
(Lemma~\ref{lem:log-weighted-moment}), and concludes
Theorem~\ref{thm:M} via parameter choice.
\S\ref{sec:gaussian-tail} derives the Gaussian-tail
Corollary~\ref{cor:gaussian-tail} via Markov's inequality.

\section{Setup and notation}
\label{sec:setup}

\subsection{Window and parameters}

We work on the dyadic window $[T, 2T]$ with $T \ge 3$; all statements
are asymptotic in $T \to \infty$ unless otherwise noted.  The integer
$k$ is in the Selberg range $1 \le k \le K L_T$, where $L_T =
\log\log(qT)$ and $K > 0$ is fixed.  The proof uses one Selberg
parameter tied to $k$,
\begin{equation}\label{eq:parameter-choice}
  x \;=\; T^{1/(8k)},
\end{equation}
so that $x^{3k} = T^{3/8} \le T/\log T$, satisfying the length
condition of Soundararajan~\cite{Soundararajan2009} Lemma~3 with room
to spare.  Relations among $x$, $k$, and $L_T$ are verified
in~§\ref{sec:main-proof}.

\subsection[Conventions for log L]{Conventions for $\log L$}

As in~\eqref{eq:Xdef}, $X_\chi(t) = \Imlog L(\tfrac{1}{2}+it,\chi)$ with
the continuous-branch convention described in §\ref{sec:intro}.
Values at zero ordinates may be assigned arbitrarily, since they form
a countable set and do not affect any moment integral.  No pointwise
bound for $X_\chi$ is used in the proof of Theorem~\ref{thm:M};
only the $L^{2k}$ moment estimates of \S\ref{sec:high-moment} enter.

\subsection{Complex-character conventions}

A key structural difference from the zeta case is that $\chi(p)$
is complex, not identically $1$.  We write $\chi(p) = e^{i\theta_p}$
(or $\chi(p) = 0$ if $p \mid q$, in which case the prime is omitted
from every Dirichlet sum below).  Then
\begin{equation}\label{eq:complex-char-imag}
  \Imag\!\left(\frac{\chi(p)}{p^{1/2+it}}\right)
  \;=\; \frac{1}{p^{1/2}} \sin(\theta_p - t\log p),
\end{equation}
which for $\chi$ non-real is \emph{not} simply $-\sin(t\log p)/\sqrt p$.
We emphasise: no step below reduces to a cosine or sine in $t\log p$
alone.  Every Dirichlet sum in the proof carries the complex
coefficient $\chi(p)/\sqrt p$ as a genuinely complex weight, and the
Montgomery--Vaughan mean-value theorem is applied in its
complex-coefficient form (MV~1974 Corollary~2,
see~\cite{MV1974}).

\subsection{Background}
\label{sec:background}

The Selberg-type identity for $\Imlog L$ under GRH is the starting
point of the argument: we use Selberg's pointwise approximate formula
for $S(t)$ (Selberg~\cite{Selberg1946} \S 4, Theorem~2; see
Goldston~\cite{Goldston2004} \S 10 for a modern exposition) ported to
$L(s, \chi)$ at fixed $q$, as established in
Proposition~\ref{prop:selberg-pointwise-L} of \S\ref{sec:S1-reduction}.
The pointwise identity yields the three-piece decomposition
$X_\chi(t) = \Imag\,\mathcal{D}_\chi(t) + R_\chi(t,x)$ with
$|R_\chi|$ controlled in magnitude, after which the high-moment
machinery of \S\ref{sec:high-moment} produces Theorem~\ref{thm:M}
under GRH alone.

\begin{remark}[Implicit constants depending on $q$]
All implicit constants below may depend on $q$ through:
(i) an overall $\log q$ factor absorbed into the initial constant;
(ii) an $O_q(1)$ boundary contribution from the functional-equation
factor of $L(s,\chi)$; (iii) the Mertens difference
$\sum_{p \mid q} 1/p = O_q(1)$ between $\sum_{p \le x} 1/p$ and
$\sum_{p \le x, p \nmid q} 1/p$.  None of these depend on $T$ or $k$,
and all are uniform over primitive $\chi$ modulo the fixed $q$.
Since $q$ is fixed throughout, we will not track these factors
explicitly.
\end{remark}

\section[Selberg's pointwise formula for L(s, chi) under GRH]{Selberg's pointwise formula for $L(s,\chi)$ under GRH}
\label{sec:S1-reduction}

This section states and proves the structural analytic input of the
proof: a fixed-$q$ Dirichlet-$L$ analogue of Selberg's pointwise
formula for $S(t)$ (Selberg~\cite{Selberg1946} \S 4, Theorem~2; see
Goldston~\cite{Goldston2004} \S 10 for a modern exposition under RH
in the $\zeta$ case).  We first record the Riemann--von Mangoldt
zero-counting formula at fixed conductor (used by the strip-lemma
proof and by the smoothed explicit-formula derivation), then state
and prove Proposition~\ref{prop:selberg-pointwise-L}.

\subsection{The Riemann--von Mangoldt zero-counting formula}
\label{subsec:S1-zero-counting}

Recall that $S_\chi(t) := \pi^{-1}X_\chi(t) = \pi^{-1}\Imlog
L(\tfrac{1}{2}+it,\chi)$, using the continuous-branch convention
of~\eqref{eq:Xdef}.  Under GRH, the Riemann--von Mangoldt formula for
primitive Dirichlet $L$-functions gives, uniformly for $t \asymp T$,
\begin{equation}\label{eq:RvM}
  N_\chi^+(t) \;=\; M_\chi(t) \;+\; S_\chi(t) \;+\; O_q(1),
  \qquad
  M_\chi(t) \;:=\; \frac{t}{2\pi}\log\frac{qt}{2\pi e},
\end{equation}
where $N_\chi^+(t)$ counts nontrivial zeros $\rho = \tfrac{1}{2}+i\gamma$
of $L(s,\chi)$ with $0 < \gamma \le t$, counted with multiplicity.
The identity~\eqref{eq:RvM} is the classical argument-principle form
of the Riemann--von Mangoldt zero-counting formula for primitive
Dirichlet $L$-functions, with the gamma-factor and endpoint
contributions absorbed into the $O_q(1)$ term; for an exposition see
Davenport~\cite{Davenport2000} \S 16, with the obvious modification
that for non-real $\chi$ one counts only positive ordinates rather
than relying on the $\gamma \mapsto -\gamma$ symmetry of the zero set
that holds in the $\zeta$ case.  Explicit constants in the
zero-counting density at fixed $q$, when needed, are supplied by
Bennett--Martin--O'Bryant--Rechnitzer~\cite{BMOR2021} Theorem~1.1
(zero-counting bounds for Dirichlet $L$-functions).
Since $N_\chi^+$ is non-decreasing in $t$, the function
$t \mapsto M_\chi(t) + S_\chi(t)$ is non-decreasing up to an $O_q(1)$
jitter.  This local zero-density bound is the sole property of
$S_\chi$ extracted from~\eqref{eq:RvM}; it enters the smoothed
explicit-formula derivation (Step~1 of the proof of
Proposition~\ref{prop:selberg-pointwise-L}) and the strip-integration
estimate (Lemma~\ref{lem:selberg-strip}).

\subsection[Selberg's pointwise formula for L(s, chi) under GRH]{Selberg's pointwise formula for $L(s,\chi)$ under GRH}
\label{subsec:selberg-pointwise-L}

The structural input of the proof of Theorem~\ref{thm:M} is a fixed-$q$
Dirichlet-$L$ analogue of Selberg's pointwise approximate formula for
$S(t)$ (Selberg~\cite{Selberg1946} \S 4, Theorem~2; see
Goldston~\cite{Goldston2004} \S 10 for a modern exposition).  We state
the analogue here under GRH for $L(s,\chi)$.

Fix the de la Vall\'ee-Poussin smoothed prime-power weight
$\Lambda_x: \mathbb{N} \to \mathbb{R}_{\ge 0}$ defined for $x > 1$ by
\begin{equation}\label{eq:Lambda-x-def}
  \Lambda_x(n) \;:=\;
   \begin{cases}
     \Lambda(n), & 1 \le n \le x, \\[2pt]
     \Lambda(n)\,\dfrac{\log^2(x^3/n) - 2\log^2(x^2/n)}{2\log^2 x},
       & x \le n \le x^2, \\[6pt]
     \Lambda(n)\,\dfrac{\log^2(x^3/n)}{2\log^2 x}, & x^2 \le n \le x^3, \\[6pt]
     0, & n > x^3,
   \end{cases}
\end{equation}
which satisfies $0 \le \Lambda_x(n) \le \Lambda(n)$ for all $n$ and is
supported on prime powers $n = p^r \le x^3$
(Selberg~\cite{Selberg1946} Lemma~10).  Set
\begin{equation}\label{eq:sigma-x-def}
  \sigma_x \;:=\; \frac12 + \frac{2}{\log x},
\end{equation}
which is the fixed GRH normalization used below (see
Remark~\ref{rem:sigma-xt}).

Define the principal complex prime-power Dirichlet polynomial
\begin{equation}\label{eq:Dchi-def}
  \mathcal{D}_\chi(t)
    \;:=\; \sum_{n < x^3}
       \frac{\chi(n)\,\Lambda_x(n)}{n^{\sigma_x + it}\,\log n},
\end{equation}
and the auxiliary unweighted prime-power Dirichlet polynomial
\begin{equation}\label{eq:Zchi-def}
  \mathcal{Z}_\chi(t)
    \;:=\; \sum_{n \le x^3}
       \frac{\chi(n)\,\Lambda_x(n)}{n^{\sigma_x + it}}.
\end{equation}
Both are supported on prime powers $n = p^r \le x^3$, and both have
genuinely complex coefficients (since $\chi$ is non-real in general).

\begin{proposition}[Selberg pointwise formula for $L(s,\chi)$ under GRH]
\label{prop:selberg-pointwise-L}
Fix $q \ge 3$ squarefree odd and a primitive non-principal character
$\chi \bmod q$.  Assume GRH for $L(s, \chi)$.  There is a threshold
$T_S = T_S(q,\chi) \ge 3$ such that for every $T \ge T_S$, every
$2 \le x \le T$ with $\log x \ge 4$, and every $t \in [T, 2T]$ that is
not the ordinate of a zero of $L(s,\chi)$, with $\sigma_x$ as
in~\eqref{eq:sigma-x-def},
\begin{equation}\label{eq:selberg-L-pointwise}
  X_\chi(t) \;=\; \Imag\,\mathcal{D}_\chi(t) \;+\; R_\chi(t, x),
\end{equation}
where the real remainder $R_\chi(t, x)$ satisfies the magnitude bound
\begin{equation}\label{eq:R-chi-bound}
  \bigl|R_\chi(t, x)\bigr|
    \;\le\; C_1 \,\bigl(\sigma_x - \tfrac12\bigr)\,
                \bigl|\mathcal{Z}_\chi(t)\bigr|
       \;+\; C_2(q,\chi)\,\frac{\log(qT)}{\log x},
\end{equation}
with $C_1$ an absolute constant and $C_2(q,\chi)$ depending only on
the fixed $q, \chi$, both uniform in $T, t, x$ within the stated
ranges.  Equivalently, applying $|\Imag z| \le |z|$ to the principal
term and the elementary inequality
$|a+b+c|^{2k} \le 3^{2k}(|a|^{2k} + |b|^{2k} + |c|^{2k})$ pointwise
yields, for every integer $k \ge 1$,
\begin{equation}\label{eq:Xchi-2k-pointwise}
  |X_\chi(t)|^{2k}
    \;\le\; 3^{2k}\!\left(
      |\mathcal{D}_\chi(t)|^{2k}
      + \bigl(C_1(\sigma_x - \tfrac12)\bigr)^{2k}|\mathcal{Z}_\chi(t)|^{2k}
      + \bigl(C_2(q,\chi)\log(qT)/\log x\bigr)^{2k}
    \right),
\end{equation}
which is the form invoked in \S\ref{sec:high-moment}.  Since the
zero ordinates of $L(s,\chi)$ form a countable set, the values of
$X_\chi(t)$ at those points may be assigned arbitrarily and
\eqref{eq:Xchi-2k-pointwise} holds for the purposes of every moment
integral below.
\end{proposition}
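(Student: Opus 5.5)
I will follow Selberg's classical argument for $S(t)$ (as presented in Goldston~\cite{Goldston2004} \S 10), replacing $\zeta$ by $L(s,\chi)$ and the Riemann--von Mangoldt formula by~\eqref{eq:RvM}. The first step is Selberg's smoothed explicit formula. For $s$ not a zero and $2 \le x$, integrate $-\frac{L'}{L}(s+w,\chi)\,\frac{x^{w}(x^{w}-1)^2}{w^3}$ over a far-right vertical line and shift the contour left. This gives
\[
  -\frac{L'}{L}(s,\chi) = \sum_{n \le x^3}\frac{\chi(n)\Lambda_x(n)}{n^s}
  + \frac{1}{\log^2 x}\sum_{\rho}\frac{x^{\rho-s}(1-x^{\rho-s})^2}{(s-\rho)^3}
  + O_q\!\Bigl(\frac{x^{-2\sigma}}{\log^2 x}\Bigr),
\]
where the last term collects the trivial zeros, which are at $-a-2m$ with $a\in\{0,1\}$ for fixed $q$. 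The normalization $\log^2 x$ matches~\eqref{eq:Lambda-x-def}.

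Next I take $s=\sigma+it$ with $\sigma \ge \sigma_x$. Under GRH, $|x^{\rho-s}| = x^{1/2-\sigma} \le e^{-2}$. Following Selberg, the zero sum is then bounded by $\ll (\log^2 x)^{-1}x^{1/2-\sigma}\sum_\rho |s-\rho|^{-3}$. I control this using
\[
  \sum_\rho \frac{\sigma-\tfrac12}{(\sigma-\tfrac12)^2+(t-\gamma)^2} = \Real\frac{L'}{L}(s,\chi)+\tfrac12\log(qt)+O_q(1),
\]
which is the Hadamard product for the completed $L$-function, with the fixed-$q$ gamma factor contributing $\tfrac12\log(q|t|)$. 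Combining these and absorbing as in Selberg's Lemma gives, for $\sigma \ge \sigma_x$,
\[
  \frac{L'}{L}(\sigma+it,\chi) = -\mathcal{Z}_\chi^{(\sigma)}(t) + O\bigl(x^{1/2-\sigma}(|\mathcal{Z}_\chi(t)| + \log(qT))\bigr),
\]
where $\mathcal{Z}_\chi^{(\sigma)}$ denotes the polynomial with $n^{-\sigma}$ in place of $n^{-\sigma_x}$. The same argument also gives the bound
\[
  \sum_\rho \frac{\sigma_x-\tfrac12}{(\sigma_x-\tfrac12)^2+(t-\gamma)^2} \ll |\mathcal{Z}_\chi(t)| + \log(qT).
\]

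Then I integrate $-\Imag \frac{L'}{L}(\sigma+it,\chi)$ in $\sigma$ from $\infty$ down to $\sigma_x$. Term by term, the Dirichlet polynomial integrates to $\Imag\mathcal{D}_\chi(t)$ up to the difference between weights at $\sigma$ and $\sigma_x$. As in Selberg, that difference is handled by noting that $\int_{\sigma_x}^\infty x^{1/2-\sigma}\,d\sigma = e^{-2}/\log x$, which produces the remainder $(\sigma_x-\tfrac12)(|\mathcal{Z}_\chi|+\log(qT))$. The remaining piece is $\Imlog L(\sigma_x+it)-\Imlog L(\tfrac12+it)$. Writing it as $\int_{1/2}^{\sigma_x}\Imag\frac{L'}{L}$ and using the Hadamard expansion, it equals $\sum_\rho \arctan$-type terms plus $O_q(\sigma_x - \tfrac12)\log(qT)$. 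Each $\arctan$ term is bounded by $(\sigma_x-\tfrac12)$ times the Poisson kernel sum above, using $|\arctan(a/b)| \le $ const $\cdot\, a b/(a^2+b^2)$ after Selberg's standard comparison. This is exactly where I would invoke the strip-integration estimate Lemma~\ref{lem:selberg-strip} as the paper indicates. That yields~\eqref{eq:R-chi-bound} with $C_1$ absolute and all $q$-dependence in $C_2$. The stated consequence~\eqref{eq:Xchi-2k-pointwise} is then immediate from $|\Imag z|\le|z|$ and the convexity bound $|a+b+c|^{2k}\le 3^{2k-1}(\cdots)$.

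I expect the strip estimate between $\tfrac12$ and $\sigma_x$ to be the main obstacle. Each $\arctan$ term is individually bounded by $\pi/2$, and a naive count of zeros within $1/\log x$ of $t$ would only give $\log T/\log x$. Getting the stated combination requires Selberg's trick of comparing $\arctan$ with the Poisson kernel at $\sigma_x$ specifically, which in turn relies on GRH. I would follow Goldston's \S 10 computation line by line, checking only that the gamma-factor contribution $\tfrac12\log(q t/2\pi)$ and the trivial zeros are $O_q(\log(qT))$.
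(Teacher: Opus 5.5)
Your proposal follows essentially the same route as the paper: the $\chi$-twisted Selberg smoothed explicit formula, the Hadamard partial-fraction identity for the Poisson-kernel sum, absorption of the zero sum at $\sigma_x$ via $|x^{\rho-\sigma_x-it}|=e^{-2}$, and integration over $[\sigma_x,\infty)$ to produce $\Imag\mathcal{D}_\chi(t)$ plus an $O\bigl((\sigma_x-\tfrac12)(|\mathcal{Z}_\chi(t)|+\log(qT))\bigr)$ error; for the strip $[\tfrac12,\sigma_x]$ you appeal to Lemma~\ref{lem:selberg-strip}, just as the paper does. The one thing to fix is your heuristic for that lemma, because $|\arctan(a/b)|\le C\,ab/(a^2+b^2)$ is false (let $b\to0$) and the raw $\arctan$ terms cannot be controlled zero by zero: Selberg's comparison first subtracts $(\sigma_x-\tfrac12)\,\Imag\frac{L'}{L}(\sigma_x+it,\chi)$, after which each zero contributes a term of size $|\arctan(a/b)-ab/(a^2+b^2)|\le C\,a^2/(a^2+b^2)$ with $a=\sigma_x-\tfrac12$, $b=t-\gamma$ (this is the ``$(\sigma_x-\tfrac12)$ times the Poisson kernel'' bound you intended), while the subtracted term is bounded by the explicit formula at $\sigma_x$, which is where $|\mathcal{Z}_\chi(t)|$ enters the strip bound.
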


\begin{remark}[Why an equality-plus-magnitude-bound, not three signed pieces]
\label{rem:magnitude-bound}
Selberg's Theorem~2 (\cite{Selberg1946} \S 4) states a pointwise
identity of the form ``$S(t) =$ explicit Dirichlet polynomial $+$
$O(\delta\,|\text{prime sum}|) + O(\delta \log t)$,'' where the
$O$-terms are unsigned magnitude bounds.  We mirror that structure:
the principal term $\Imag\,\mathcal{D}_\chi(t)$ is an explicit
real-valued quantity (the imaginary part of a complex Dirichlet
polynomial, taken with $\Imag$ \emph{outside} the sum, since
$\chi(n)$ is complex), and the remainder $R_\chi(t,x)$ is a
deterministic real-valued quantity controlled in magnitude by
\eqref{eq:R-chi-bound}.  We do not split $R_\chi$ into signed
sub-pieces: the moment estimate in \S\ref{sec:high-moment} only
uses~\eqref{eq:Xchi-2k-pointwise}, which depends on $|R_\chi|$
through $|\mathcal{Z}_\chi|$ and the $C_2$-error only.
\end{remark}

\begin{remark}[Convention for $\sigma_x$]
\label{rem:sigma-xt}
Under GRH, Selberg's adaptive parameter is of size $1/\log x$ above
the critical line.  We use the convenient normalization
\[
  \sigma_x = \frac12+\frac{2}{\log x}.
\]
The precise numerical constant is immaterial for the high-moment
argument.  What is needed is that
$\sigma_x-\tfrac12\asymp1/\log x$ and that, in the smoothed explicit
formula, the zero-sum term can be absorbed into the left-hand side.
With the above choice,
\[
  |x^{\rho-\sigma_x-it}|=e^{-2}
\]
under GRH, and the resulting absorption coefficient is
\[
  \frac{e^{-2}(1+e^{-2})^2}{4}<1.
\]
Thus Selberg's pointwise formula and the strip estimate apply with
only harmless changes in the absolute constants.
\end{remark}

\begin{remark}[Why the sum extends to $x^3$ and not $x$]
\label{rem:length-x3}
The smoothed weight $\Lambda_x$ defined in~\eqref{eq:Lambda-x-def} is
supported on $[1, x^3]$, not $[1, x]$.  This is intrinsic to Selberg's
de la Vall\'ee-Poussin smoothing (\cite{Selberg1946} Lemma~10): the
quadratic damping factor in the regime $x \le n \le x^3$ is what gives
the $1/\log^2 x$ factor in the smoothed explicit formula's zero-sum
remainder, which under GRH yields the $\log(qT)/\log x$ pointwise bound
on the $\log(qT)$-piece of $R_\chi(t, x)$ in~\eqref{eq:R-chi-bound}.
The length condition
$x^k \le T/\log T$ of Soundararajan~\cite{Soundararajan2009} Lemma~3
that we invoke in \S\ref{sec:high-moment} therefore reads
$(x^3)^k \le T/\log T$, i.e.\ $x^{3k} \le T/\log T$.  The choice
$x = T^{1/(8k)}$ in~\S\ref{sec:main-proof} gives $x^{3k} = T^{3/8}$,
which satisfies this with room to spare.
\end{remark}

\begin{remark}[Three-term split for the moment proof]
\label{rem:abc-decomposition}
The pointwise inequality~\eqref{eq:Xchi-2k-pointwise} is the
$|a + b + c|^{2k}$-ready form exploited in
\S\ref{sec:high-moment}: $\mathcal{D}_\chi$ is the principal complex
prime-power Dirichlet polynomial of length $< x^3$, with coefficients
$\chi(n)\Lambda_x(n)/(n^{\sigma_x}\log n)$ that satisfy
$|a(p^r)| \le 1/(r p^{r/2}) \cdot p^{-2r/\log x}$ at $n = p^r$ (the
$\sigma_x$-shift contributes only the bounded factor
$p^{-2r/\log x} \le 1$); $\mathcal{Z}_\chi$ is the auxiliary
prime-power polynomial of the same length but with the unweighted
coefficient $\chi(n)\Lambda_x(n)/n^{\sigma_x}$ (no $1/\log n$ factor),
giving $|a(p^r)| \le \log p / (p^{r/2}) \cdot p^{-2r/\log x}$; and the
deterministic $C_2(q,\chi)\log(qT)/\log x$ piece is absorbed
pointwise.  The $(\sigma_x - \tfrac12)^{2k} = (2/\log x)^{2k}$
prefactor on the $\mathcal{Z}_\chi$ contribution exactly cancels the
extra $(\log x)^{2k}$ variance growth from the log-weighted
coefficients of $\mathcal{Z}_\chi$, so the $\mathcal{Z}_\chi$-piece
contributes at the same scale as the $\mathcal{D}_\chi$-piece (no
$L_T$ amplification, by the Mertens-type $\sum_p (\log p)^2/p \sim
\tfrac12 (\log x)^2$ rather than $\sum_p 1/p \sim L_T$).
See Lemma~\ref{lem:high-moment} for the $\mathcal{D}_\chi$ moment
bookkeeping and Lemma~\ref{lem:log-weighted-moment} for the
$\mathcal{Z}_\chi$ moment bookkeeping.
\end{remark}

\begin{proof}[Proof of Proposition~\ref{prop:selberg-pointwise-L}]
We port Goldston's modern exposition
(\cite{Goldston2004} \S 10, Theorem~9; itself a transcription under RH
of Selberg~\cite{Selberg1946} \S 4, Theorem~2) from $\zeta$ to
$L(s,\chi)$ at fixed squarefree odd conductor $q \ge 3$.  Steps~1--5
are derived explicitly below; the strip integration in Step~6 is
quoted from the fixed-conductor strip-integration estimate of
Selberg~\cite{Selberg1946L} (Lemma~\ref{lem:selberg-strip}), with
the structural reasons for fixed-$q$ specialization recorded in the
proof of that lemma.  Three structural changes to the $\zeta$ argument
are required throughout:
\begin{itemize}
\item[(i)] The prime weight $\Lambda(n)$ is replaced by $\chi(n)\Lambda(n)$,
which is \emph{complex} when $\chi$ is non-real.  We are therefore
careful to apply $\Imag$ to entire complex Dirichlet polynomials
\emph{after} the prime sum is formed --- not to extract $\Imag(n^{-it}) = -\sin(t\log n)$ inside the sum, which is invalid for non-real $\chi(n)$.
\item[(ii)] The Riemann--von Mangoldt zero-density factor $\log T$ for
$\zeta$ is replaced by $\log(qT)$ for $L(s,\chi)$ (the conductor $q$
enters through the functional-equation factor of $L$; see
\eqref{eq:RvM} and Davenport~\cite{Davenport2000} \S 16, with explicit
constants from
Bennett--Martin--O'Bryant--Rechnitzer~\cite{BMOR2021} Theorem~1.1).
\item[(iii)] Mertens-type prime sums pick up the harmless contribution
$\sum_{p \mid q} 1/p = O_q(1)$, dominated by $\log(qT)/\log x$ for $T$
large at fixed $q$.
\end{itemize}
All other steps transfer verbatim.

\medskip
\noindent\textbf{Step 1.}\enspace
\emph{Smoothed explicit formula for $L'(s,\chi)/L(s,\chi)$.}
The standard truncated explicit formula for $L'/L$
(Davenport~\cite{Davenport2000} \S 17 for the unweighted form; the
$\chi$-version of Selberg~\cite{Selberg1946} Lemma~10) gives, for
$s = \sigma + it$ with $s \ne 1$, $s \ne \rho$ for any nontrivial zero
$\rho$, $s$ not a trivial zero of $L(\cdot,\chi)$, and $x > 1$,
\begin{equation}\label{eq:smoothed-Lp-over-L}
  \frac{L'}{L}(s, \chi)
   \;=\;
   {-}\!\sum_{n < x^3} \frac{\chi(n)\,\Lambda_x(n)}{n^s}
   \;+\; \frac{1}{\log^2 x}\!
          \sum_\rho \frac{x^{\rho-s}\bigl(1 - x^{\rho-s}\bigr)^2}
                          {(\rho - s)^3}
   \;+\; \frac{1}{\log^2 x}\!\sum_{\eta\,\text{trivial}}
           \frac{x^{\eta-s}\bigl(1 - x^{\eta-s}\bigr)^2}{(\eta - s)^3},
\end{equation}
where:
\begin{itemize}
\item The first sum is over the smoothed weight $\Lambda_x$
of~\eqref{eq:Lambda-x-def}, supported on prime powers $n \le x^3$.
\item The second sum is over the nontrivial zeros $\rho$ of
$L(s,\chi)$; under GRH all $\rho = \tfrac12 + i\gamma$.
\item The third sum is over the trivial zeros $\eta$ of $L(s,\chi)$.
For $\chi$ primitive non-principal of conductor $q$, the trivial
zeros are: $\eta \in \{0, -2, -4, \ldots\}$ (i.e.\ $\eta = -2k$ for
$k \ge 0$, including $\eta = 0$) when $\chi$ is even ($\chi(-1) = +1$);
or $\eta \in \{-1, -3, -5, \ldots\}$ ($\eta = -(2k+1)$ for $k \ge 0$)
when $\chi$ is odd ($\chi(-1) = -1$).  In either case the gamma-factor
information from the functional equation of $L(s,\chi)$ is encoded
in this trivial-zero sum (no separate functional-equation term arises
from the smoothed identity).
\end{itemize}
There is \emph{no} pole-at-$s=1$ boundary term, since $L(s,\chi)$ is
entire for non-principal $\chi$.

At $s = \sigma_x + it$ with $\sigma_x = \tfrac12 + 2/\log x$ and
$t \in [T, 2T]$, the trivial-zero summands have modulus
$|x^{\eta - \sigma_x}|(1 + |x^{\eta - \sigma_x}|)^2 / (|\eta - s|^3
\log^2 x)$.  For the leading trivial zeros with $|\eta| \ge 1$ (i.e.\
$\eta \in \{-1, -2\}$), $|x^{\eta - \sigma_x}| \le x^{-1-\sigma_x} =
x^{-3/2}\,e^{-2}$ and $|\eta - s| \ge |\eta| \ge 1$; the sum over
$|\eta| \ge 1$ contributes $O(x^{-3/2}/\log^2 x)$.  For even $\chi$
the trivial zero at $\eta = 0$ requires a separate estimate: there
$|x^{0 - \sigma_x}| = x^{-\sigma_x} = x^{-1/2} e^{-2}$ and
$|\eta - s| = |s| = \sqrt{\sigma_x^2 + t^2} \asymp T$, so the
$\eta = 0$ summand contributes $O\bigl(x^{-1/2}/(T^3 \log^2 x)\bigr)$,
also negligible at $T \to \infty$.  We therefore record the
simplified form
\begin{equation}\label{eq:smoothed-Lp-over-L-clean}
  \frac{L'}{L}(\sigma_x + it, \chi)
   \;=\;
   {-}\!\sum_{n < x^3} \frac{\chi(n)\,\Lambda_x(n)}{n^{\sigma_x+it}}
   \;+\; \frac{1}{\log^2 x}\!
          \sum_\rho \frac{x^{\rho - \sigma_x - it}\bigl(1 - x^{\rho - \sigma_x - it}\bigr)^2}
                          {(\rho - \sigma_x - it)^3}
   \;+\; O\!\bigl(x^{-3/2}/\log^2 x\bigr),
\end{equation}
in which the trivial-zero contribution is absorbed into the explicit
$x^{-3/2}/\log^2 x$ remainder (which is much smaller than the
$\log(qT)$-sized errors elsewhere in the proof).

\medskip
\noindent\textbf{Step 2.}\enspace
\emph{Partial-fractions form for $L'/L$.}
We start from the standard Hadamard / completed-$L$ partial-fraction
expansion for $L'/L$ at fixed conductor; see
Davenport~\cite{Davenport2000} \S 12 eqns.\ (5)--(7) for the
fixed-character $\chi$ case (the $\chi$-analogue of
Selberg~\cite{Selberg1946} Lemma~11).  With $a = a(\chi) \in \{0,1\}$
the parity index of $\chi$ ($a = 0$ for $\chi$ even, $a = 1$ for
$\chi$ odd), the completed $L$-function
\[
  \xi(s,\chi) \;=\; (q/\pi)^{(s+a)/2}\,\Gamma\!\Bigl(\tfrac{s+a}{2}\Bigr)\,
                    L(s,\chi)
\]
satisfies a Hadamard product expansion whose logarithmic derivative
yields, for $s \ne \rho$ (any nontrivial zero of $L(\cdot,\chi)$),
\begin{equation}\label{eq:completed-L-partial}
  \frac{L'}{L}(s,\chi)
   \;=\; -\tfrac12\log\!\frac{q}{\pi}
       \;-\; \tfrac12\,\frac{\Gamma'}{\Gamma}\!\Bigl(\frac{s+a}{2}\Bigr)
       \;+\; B(\chi)
       \;+\; \sum_\rho\!\Bigl(\frac{1}{s-\rho} + \frac{1}{\rho}\Bigr),
\end{equation}
where $B(\chi)$ is the Hadamard constant and the $\rho$-sum runs over
the nontrivial zeros, counted with multiplicity (and convergence is
ensured by the conventional pairing).  For $s = \sigma + it$ with
$\tfrac12 \le \sigma \le 10$ and $t \ge 2$, the gamma-factor term
satisfies
\[
  \tfrac12\,\frac{\Gamma'}{\Gamma}\!\Bigl(\frac{s+a}{2}\Bigr)
   \;=\; \tfrac12\log\!\Bigl(\frac{|t|+1}{2}\Bigr) + O(1)
\]
by Stirling's formula uniformly for $\sigma$ in the stated range; the
constant $-\tfrac12\log(q/\pi)$ contributes $O(\log q)$, and $B(\chi)$
satisfies $|\Real B(\chi)| \le c_0\log q$ at fixed conductor (this is
the well-known bound $\Real B(\chi) = -\sum_\rho \Real(1/\rho)$ together
with the explicit zero-counting bound at fixed $q$).  Grouping these
three contributions into a single $O_q(\log(qt))$ term gives the
working partial-fraction form
\begin{equation}\label{eq:partial-fractions-L}
  \frac{L'}{L}(s, \chi)
    \;=\; \sum_\rho \!\Bigl(\frac{1}{s - \rho} + \frac{1}{\rho}\Bigr)
       \;+\; O_q\!\bigl(\log(qt)\bigr),
\end{equation}
uniformly for $\tfrac12 \le \Re s \le 10$ and $t \ge 2$.

Take real parts of~\eqref{eq:partial-fractions-L} at $s = \sigma + it$
(with $\sigma > \tfrac12$).  Under GRH each nontrivial zero is
$\rho = \tfrac12 + i\gamma$ with $\gamma \in \mathbb{R}$, so
$\Real(1/(s-\rho)) = (\sigma - \tfrac12)/((\sigma - \tfrac12)^2 +
(t-\gamma)^2)$.  The constant $\Real\sum_\rho 1/\rho$ in the
identity~\eqref{eq:completed-L-partial} is independent of $t$ and is
the $-\Real B(\chi)$ contribution recorded above; it is bounded by
$c_0\log q = O_q(1)$ at fixed conductor and is therefore absorbed into
the $O_q(\log(qt))$ remainder.  The resulting identity is
\begin{equation}\label{eq:Re-LpL-partial}
  \Real\!\frac{L'}{L}(\sigma + it, \chi)
    \;=\; \sum_\gamma \frac{\sigma - \tfrac12}
                          {(\sigma - \tfrac12)^2 + (t - \gamma)^2}
       \;+\; O_q\!\bigl(\log(qT)\bigr),
\end{equation}
where the sum runs over the imaginary parts $\gamma$ of the nontrivial
zeros of $L(\cdot, \chi)$, counted with multiplicity, and we have
absorbed $\log(qt) = \log(qT) + O(1)$ for $t \in [T, 2T]$.  No
conjugation symmetry of the zero set is invoked: such a symmetry would
not hold for non-real $\chi$, since the $\chi$-conjugate
$L(s, \bar\chi)$ is a different $L$-function with its own zero set.
Zeros far from $t$ are controlled by the absolute convergence of the
partial-fractions series in the strip $\Re s > \tfrac12$
(Goldston~\cite{Goldston2004} eq.~(2.18) for the $\zeta$ analogue,
with $\log T \to \log(qT)$ at fixed $q$).

\medskip
\noindent\textbf{Step 3.}\enspace
\emph{Local zero-density bound at $\sigma_x$ via the
$|x^{\rho - s}| = e^{-2}$ trick.}
This is the central technical step.  Take real parts of
\eqref{eq:smoothed-Lp-over-L-clean} at $s = \sigma_x + it$.  Under GRH,
$\rho - s = -2/\log x + i(\gamma - t)$, so
\begin{equation}\label{eq:exp-decay-GRH}
  \bigl|x^{\rho - \sigma_x - it}\bigr| \;=\; x^{-2/\log x} \;=\; e^{-2}.
\end{equation}
The $\rho$-sum in~\eqref{eq:smoothed-Lp-over-L-clean} is
\begin{equation}\label{eq:rho-sum-explicit}
  \frac{1}{\log^2 x}\sum_\rho
      \frac{x^{\rho - \sigma_x - it}\bigl(1 - x^{\rho - \sigma_x - it}\bigr)^2}
           {(\rho - \sigma_x - it)^3}.
\end{equation}
Each summand has modulus
\[
  \frac{|x^{\rho-s}| \cdot |1 - x^{\rho-s}|^2}{|\rho - s|^3 \log^2 x}
   \;\le\; \frac{e^{-2}\cdot (1 + e^{-2})^2}{|\rho - s|^3 \log^2 x},
\]
and $|\rho - s|^2 = (\sigma_x - \tfrac12)^2 + (t - \gamma)^2$.  Setting
$\delta := \sigma_x - \tfrac12 = 2/\log x$, we have
$|\rho - s|^3 = (\delta^2 + (t-\gamma)^2)^{3/2}$, and the bound
$(\delta^2 + (t-\gamma)^2)^{1/2} \ge \delta$ gives
\[
  \frac{1}{|\rho - s|^3}
   \;=\; \frac{1}{(\delta^2 + (t-\gamma)^2)^{3/2}}
   \;\le\; \frac{1}{\delta \cdot (\delta^2 + (t-\gamma)^2)}
   \;=\; \frac{1}{\delta^2}\cdot
         \frac{\delta}{\delta^2 + (t-\gamma)^2}.
\]
Hence
\begin{equation}\label{eq:rho-sum-real-part-bound}
  \biggl|\Real\!\Bigl[\frac{1}{\log^2 x}\!\sum_\rho
      \frac{x^{\rho - s}(1 - x^{\rho - s})^2}{(\rho - s)^3}\Bigr]\biggr|
   \;\le\; \frac{C_0}{\log^2 x \cdot (\sigma_x - \tfrac12)^2}
      \sum_\gamma \frac{\sigma_x - \tfrac12}
                       {(\sigma_x - \tfrac12)^2 + (t - \gamma)^2},
\end{equation}
with $C_0 = e^{-2}(1 + e^{-2})^2 < 1$ an absolute constant.  Since
$\sigma_x - \tfrac12 = 2/\log x$, the prefactor is
$C_0/(\log^2 x \cdot (2/\log x)^2) = C_0/4$.

Now equate the real parts of~\eqref{eq:smoothed-Lp-over-L-clean}
and~\eqref{eq:Re-LpL-partial} at $s = \sigma_x + it$.  By
\eqref{eq:Re-LpL-partial}, the left-hand side equals
\[
  \sum_\gamma \frac{\sigma_x - \tfrac12}
                    {(\sigma_x - \tfrac12)^2 + (t - \gamma)^2}
   \;+\; O_q(\log(qT)).
\]
By~\eqref{eq:smoothed-Lp-over-L-clean}, it equals
\[
  -\Real\!\sum_{n < x^3}\frac{\chi(n)\Lambda_x(n)}{n^{\sigma_x + it}}
   \;+\; \Real\bigl[\text{zero-sum~\eqref{eq:rho-sum-explicit}}\bigr]
   \;+\; O_q(\log(qT)).
\]
Equating and using~\eqref{eq:rho-sum-real-part-bound}:
\begin{equation}\label{eq:zero-density-equation}
  \sum_\gamma \frac{\sigma_x - \tfrac12}
                    {(\sigma_x - \tfrac12)^2 + (t - \gamma)^2}
   \;\le\; \bigl|\Real\,\mathcal{Z}_\chi(t)\bigr|
       \;+\; \frac{C_0}{4}
            \sum_\gamma \frac{\sigma_x - \tfrac12}
                              {(\sigma_x - \tfrac12)^2 + (t - \gamma)^2}
       \;+\; O_q\!\bigl(\log(qT)\bigr),
\end{equation}
where we have used the fact that $|\Real\,\mathcal{Z}_\chi(t)| \le
|\mathcal{Z}_\chi(t)|$ in the rearrangement and absorbed signs into the
inequality.  Since $C_0 = e^{-2}(1+e^{-2})^2 < 0.18 < 1$, we have
$C_0/4 < 0.05 < 1$, so we may move the second summand on the right to
the left side and divide by $1 - C_0/4 > 0.95 > 1/2$, obtaining:
\begin{equation}\label{eq:Sigma-gamma-bound}
  \sum_\gamma \frac{\sigma_x - \tfrac12}
                    {(\sigma_x - \tfrac12)^2 + (t - \gamma)^2}
   \;\le\; 2\,\bigl|\mathcal{Z}_\chi(t)\bigr|
       \;+\; C_2'(q,\chi)\,\log(qT),
\end{equation}
for an absolute constant in front of $|\mathcal{Z}_\chi(t)|$ and a
$q,\chi$-dependent constant in front of $\log(qT)$.  This is the
Dirichlet-$L$ analogue of the bound at the bottom of
Goldston~\cite{Goldston2004} p.\ 23 (his
``$\sum_\gamma (\sigma_{x,t}-\tfrac12)/((\sigma_{x,t}-\tfrac12)^2 +
(t-\gamma)^2) = O(|\sum_n \Lambda_x(n)/n^{\sigma_x+it}|) + O(\log t)$'').

\smallskip
\emph{Remark on what was proved.}
We did \emph{not} bound the zero-sum
\eqref{eq:rho-sum-explicit} directly by $\log(qT)/\log x$; such a
direct pointwise bound is in fact \emph{false} (a single zero with
$|t-\gamma| \ll 1/\log x$ contributes order $\log x$).  The correct
argument equates the zero-sum to the Dirichlet polynomial
$\mathcal{Z}_\chi(t)$ via the partial-fractions
identity~\eqref{eq:Re-LpL-partial}; the gain is that the unsigned
$\mathcal{Z}_\chi(t)$ is what enters the moment estimate
in~\S\ref{sec:high-moment}, not a uniform pointwise bound on the
zero-sum.  This is the load-bearing content of Selberg's Theorem~2.

\medskip
\noindent\textbf{Step 4.}\enspace
\emph{Substituted explicit formula on $\sigma \ge \sigma_x$.}
For $\sigma \ge \sigma_x$, evaluate~\eqref{eq:smoothed-Lp-over-L} at
$s = \sigma + it$.  Under GRH, $|x^{\rho - s}| = x^{1/2 - \sigma} \le
e^{-2}$.  Bound the nontrivial zero-sum by absolute values: the same
cube-bound chain as in~\eqref{eq:rho-sum-real-part-bound} (now with
$|\rho - s|^3 \ge (\sigma_x - \tfrac12)\bigl((\sigma_x-\tfrac12)^2 +
(t-\gamma)^2\bigr)$, since $\sigma - \tfrac12 \ge \sigma_x - \tfrac12$)
gives
\begin{multline}\label{eq:zero-sum-general-sigma}
  \biggl|\frac{1}{\log^2 x}\!\sum_\rho
        \frac{x^{\rho - s}(1 - x^{\rho-s})^2}{(\rho-s)^3}\biggr|
   \;\le\; x^{1/2 - \sigma}\cdot \frac{C_0^*}{4}
        \cdot \sum_\gamma
          \frac{\sigma_x - \tfrac12}{(\sigma_x - \tfrac12)^2 + (t-\gamma)^2} \\
   \;\le\; x^{1/2 - \sigma}\,\bigl(C_0^*\,|\mathcal{Z}_\chi(t)|
              + C_q\,\log(qT)\bigr),
\end{multline}
where $C_0^* := (1 + e^{-2})^2 \le 2$ is a fresh absolute constant
(the factor $|x^{\rho - s}| = e^{-2}$ has been factored out into the
external $x^{1/2 - \sigma}$, leaving the bound $|1 - x^{\rho-s}|^2 \le
(1 + e^{-2})^2$).  The second inequality
in~\eqref{eq:zero-sum-general-sigma} combines the prefactor $C_0^*/4$
with~\eqref{eq:Sigma-gamma-bound}.

The trivial-zero contribution to~\eqref{eq:smoothed-Lp-over-L} at
$\sigma + it$ is bounded by $O(x^{-\sigma}/\log^2 x)$ for $\sigma \ge
\sigma_x$.  For trivial zeros with $|\eta| \ge 1$, each summand has
modulus $\le x^{\eta - \sigma}(1 + x^{\eta - \sigma})^2/
|\eta - \sigma|^3 \le x^{-\sigma}\cdot 4/(|\eta|^3 \log^2 x)$ (using
$\eta < 0$, so $x^{\eta - \sigma} \le x^{-\sigma - 1} \le x^{-\sigma}$
and $|\eta - \sigma| = \sigma - \eta \ge |\eta|$), and the resulting
$\sum_k 1/k^3$ converges.  For even $\chi$ the trivial zero at
$\eta = 0$ contributes $|x^{-s}|(1 + |x^{-s}|)^2/|s|^3 \log^2 x \le
4 x^{-\sigma}/(T^3 \log^2 x)$ since $|s| \asymp T$, also negligible.
The combined trivial-zero contribution is thus much smaller than
$x^{1/2 - \sigma}\log(qT)$ for $T$ large at fixed $q$, so it is
absorbed.  Combining,
\begin{equation}\label{eq:LpL-substituted}
  \frac{L'}{L}(\sigma + it, \chi)
   \;=\; -\sum_{n < x^3}
         \frac{\chi(n)\,\Lambda_x(n)}{n^{\sigma + it}}
   \;+\; \theta(\sigma)\,x^{1/2 - \sigma}\,
         \bigl(|\mathcal{Z}_\chi(t)| + C_q\,\log(qT)\bigr),
\end{equation}
where $\theta(\sigma) \in \mathbb{C}$ is bounded in modulus by an
absolute constant uniformly for $\sigma \ge \sigma_x$ and $t \in [T, 2T]$.
This is the Dirichlet-$L$ analogue of Goldston~\cite{Goldston2004}
eq.~(10.15) under GRH at fixed $q$.

\medskip
\noindent\textbf{Step 5.}\enspace
\emph{Contribution from the half-line $\sigma \in [\sigma_x, \infty)$.}
The continuous-branch convention~\eqref{eq:Xdef} gives
\begin{equation}\label{eq:X-as-integral}
  X_\chi(t) \;=\; -\!\int_{1/2}^{\infty}
       \Imag\,\frac{L'}{L}(\sigma + it, \chi)\, d\sigma.
\end{equation}
The contribution from the half-line $[\sigma_x, \infty)$ is therefore
$-\!\int_{\sigma_x}^\infty \Imag(L'/L)(\sigma+it,\chi)\, d\sigma$.
Substituting~\eqref{eq:LpL-substituted}, this splits into an arithmetic
piece and an error piece:
\begin{multline}\label{eq:half-line-split}
  -\!\int_{\sigma_x}^\infty \Imag\,\frac{L'}{L}(\sigma+it,\chi)\,d\sigma
   \;=\; +\!\int_{\sigma_x}^\infty \Imag\!\sum_{n<x^3}
            \frac{\chi(n)\Lambda_x(n)}{n^{\sigma+it}}\,d\sigma \\
       \;-\;\int_{\sigma_x}^\infty
            \Imag\!\bigl[\theta(\sigma)\,x^{1/2-\sigma}\,
              (|\mathcal{Z}_\chi(t)| + C_q\log(qT))\bigr]\,d\sigma.
\end{multline}
The arithmetic piece, with $\Imag$ kept outside the prime sum (since
$\chi(n)$ is complex, distributing $\Imag$ inside would be invalid):
\begin{align}
  \int_{\sigma_x}^\infty \Imag\!\sum_{n<x^3}
            \frac{\chi(n)\Lambda_x(n)}{n^{\sigma+it}}\,d\sigma
   &\;=\; \Imag\!\sum_{n<x^3}\chi(n)\Lambda_x(n)\!
            \int_{\sigma_x}^\infty n^{-\sigma-it}\,d\sigma
   \notag\\[2pt]
   &\;=\; \Imag\!\sum_{n<x^3}
            \frac{\chi(n)\,\Lambda_x(n)}{n^{\sigma_x+it}\,\log n}
   \;=\; \Imag\,\mathcal{D}_\chi(t).
   \label{eq:arith-side-Im-out}
\end{align}
The exchange of sum and integral is justified (the prime-power sum has
finitely many terms).  The error piece is bounded in magnitude by
\begin{multline}\label{eq:err-1-int}
  \biggl|\int_{\sigma_x}^\infty \Imag\!\bigl[\theta(\sigma)\,x^{1/2-\sigma}\,
       (|\mathcal{Z}_\chi(t)| + C_q\log(qT))\bigr]\,d\sigma\biggr| \\
   \;\le\; (|\mathcal{Z}_\chi(t)| + C_q\log(qT))
            \int_{\sigma_x}^\infty x^{1/2-\sigma}\,d\sigma
   \;=\; \frac{e^{-2}}{\log x}\bigl(|\mathcal{Z}_\chi(t)| + C_q\log(qT)\bigr),
\end{multline}
since $\int_{\sigma_x}^\infty x^{1/2-\sigma}\,d\sigma =
x^{1/2-\sigma_x}/\log x = e^{-2}/\log x$ and $|\theta(\sigma)| \le 1$.
The factor $1/\log x$ equals $(\sigma_x - \tfrac12)/2$, so this error
is bounded by $\tfrac12(\sigma_x - \tfrac12)\,|\mathcal{Z}_\chi(t)|
+ \tfrac12 C_q\,(\sigma_x - \tfrac12)\,\log(qT)$.  Combining
\eqref{eq:half-line-split}, \eqref{eq:arith-side-Im-out},
\eqref{eq:err-1-int}: the contribution to $X_\chi(t)$ from
$[\sigma_x, \infty)$ equals $\Imag\,\mathcal{D}_\chi(t) +
E^{(\infty)}_\chi(t,x)$ with $|E^{(\infty)}_\chi(t,x)| \le
(\sigma_x - \tfrac12)|\mathcal{Z}_\chi(t)| + C_q(\sigma_x -
\tfrac12)\log(qT)$.

\medskip
\noindent\textbf{Step 6.}\enspace
\emph{Contribution from the strip $\sigma \in [\tfrac12, \sigma_x]$ via
the Selberg strip estimate.}
The contribution to $X_\chi(t)$ from the strip $[\tfrac12, \sigma_x]$
is $-\!\int_{1/2}^{\sigma_x}\Imag(L'/L)(\sigma+it,\chi)\,d\sigma$.
This is the $J_3$ integral of Selberg's strip-integration argument,
written for the logarithmic derivative of a primitive Dirichlet
$L$-function of fixed conductor.  The fixed-character form of the
estimate is Selberg's own: it is established in
Selberg~\cite{Selberg1946L} \emph{Contributions to the theory of
Dirichlet's $L$-functions} (Skrifter Norske Vid.-Akad.\ Oslo I.\
Mat.-Naturv.\ Klasse \textbf{1946}, no.\ 3), which extends the
$\zeta$-function strip-integration estimate of \cite{Selberg1946} to
primitive Dirichlet $L$-functions of fixed conductor.  We record its
magnitude bound as a lemma and indicate the structural reasons the
fixed-conductor case is a specialization rather than a port.

\begin{lemma}[Selberg strip estimate; fixed-$q$ Dirichlet-$L$ form]
\label{lem:selberg-strip}
Let $q \ge 3$ squarefree odd, $\chi \bmod q$ primitive non-principal,
and assume GRH for $L(s,\chi)$.  With $\sigma_x = \tfrac12 +
2/\log x$, $\mathcal{Z}_\chi(t)$ as in~\eqref{eq:Zchi-def}, $T \ge T_S$
and $t \in [T, 2T]$, $x$ as in
Proposition~\ref{prop:selberg-pointwise-L}:
\begin{equation}\label{eq:strip-lemma-bound}
  \biggl|\int_{1/2}^{\sigma_x}\Imag\,\frac{L'}{L}(\sigma+it,\chi)\,d\sigma
  \biggr|
   \;\le\; C_3\,(\sigma_x - \tfrac12)\,|\mathcal{Z}_\chi(t)|
       \;+\; C_4(q,\chi)\,(\sigma_x - \tfrac12)\,\log(qT),
\end{equation}
for $C_3$ absolute and $C_4(q,\chi)$ depending only on the fixed $q,\chi$.
\end{lemma}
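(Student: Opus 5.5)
The plan is to follow Selberg's (Goldston's \S10) treatment of the strip integral. First control $\Imag\frac{L'}{L}$ pointwise in the strip through the partial-fraction form~\eqref{eq:partial-fractions-L}, then reduce to quantities already bounded at $\sigma_x$ in Step~3 and Step~4.

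For $\tfrac12 \le \sigma \le \sigma_x$, subtract~\eqref{eq:partial-fractions-L} at $s=\sigma+it$ from the same identity at $s_x=\sigma_x+it$. This gives
\[
  \frac{L'}{L}(\sigma+it,\chi) - \frac{L'}{L}(\sigma_x+it,\chi)
  = \sum_\rho\Bigl(\frac{1}{s-\rho}-\frac{1}{s_x-\rho}\Bigr) + O_q(\log(qT)).
\]
The additive constants cancel, and so does the Stirling term, since the gamma factor varies by $O(1)$ across the strip. For the value at $s_x$, use~\eqref{eq:LpL-substituted} at $\sigma=\sigma_x$ together with $|\sum_{n<x^3}\chi(n)\Lambda_x(n)n^{-\sigma_x-it}| = |\mathcal{Z}_\chi(t)|$. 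This yields
\[
  \Bigl|\frac{L'}{L}(\sigma_x+it,\chi)\Bigr| \ll |\mathcal{Z}_\chi(t)| + C_q\log(qT).
\]
Integrating over an interval of length $\sigma_x-\tfrac12$ then produces exactly the two shapes on the right of~\eqref{eq:strip-lemma-bound}.

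The remaining task is the zero-sum difference. Write $\delta=\sigma_x-\tfrac12$ and $u=\sigma-\tfrac12\in[0,\delta]$. Under GRH,
\[
  \Imag\Bigl(\frac{1}{s-\rho}-\frac{1}{s_x-\rho}\Bigr)
  = -(t-\gamma)\Bigl(\frac{1}{u^2+(t-\gamma)^2} - \frac{1}{\delta^2+(t-\gamma)^2}\Bigr).
\]
Its modulus is bounded by $\frac{\delta^2}{|t-\gamma|(\delta^2+(t-\gamma)^2)}$ when $u$ is near $0$, and more generally by
\[
  \frac{|t-\gamma|(\delta^2-u^2)}{(u^2+(t-\gamma)^2)(\delta^2+(t-\gamma)^2)}.
\]
Integrate in $u$ before summing over zeros. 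The key elementary estimate is
\[
  \int_0^\delta \frac{|t-\gamma|\,du}{u^2+(t-\gamma)^2} \le \arctan\frac{\delta}{|t-\gamma|} \le \frac{\pi}{2},
\]
and $\frac{\delta^2}{\delta^2+(t-\gamma)^2}\le 2\delta\cdot\frac{\delta}{\delta^2+(t-\gamma)^2}$ up to a constant. Together these give, for each zero,
\[
  \int_{1/2}^{\sigma_x}\Bigl|\Imag\Bigl(\frac{1}{s-\rho}-\frac{1}{s_x-\rho}\Bigr)\Bigr|\,d\sigma
  \ll \delta\cdot\frac{\delta}{\delta^2+(t-\gamma)^2}.
\]
For $|t-\gamma|\ge\delta$ this follows from the trivial bound. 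For $|t-\gamma|<\delta$ it follows from the arctangent bound, since then $\frac{\delta}{\delta^2+(t-\gamma)^2}\asymp 1/\delta$. Summing over $\gamma$ and applying the Step~3 bound~\eqref{eq:Sigma-gamma-bound} gives $\ll \delta\,(2|\mathcal{Z}_\chi(t)| + C_2'\log(qT))$, which is of the required form.

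I expect the main obstacle to be the zeros with $|t-\gamma|\ll\delta$, where the integrand genuinely blows up as $\sigma\to\tfrac12$. The plan handles them by integrating in $\sigma$ first, using the arctangent bound, and never bounding pointwise. The integral $\int_{1/2}^{\sigma_x}$ converges because $t$ is not a zero ordinate. Uniformity of all constants in $T,t,x$ comes from fixed $q$ and from $t\in[T,2T]$. Any trivial-zero or gamma-factor discrepancies are $O(\delta)$ times $O_q(\log(qT))$ and are absorbed into $C_4(q,\chi)$.
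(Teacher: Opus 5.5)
Your proposal is correct. It follows the route the paper indicates, Selberg's strip ($J_3$) estimate: subtract the partial-fraction expansions at $\sigma+it$ and $\sigma_x+it$, control $\frac{L'}{L}(\sigma_x+it,\chi)$ via \eqref{eq:LpL-substituted}, integrate each zero's kernel in $\sigma$ first to get at most $\frac{\pi}{2}\,\delta\cdot\frac{\delta}{\delta^2+(t-\gamma)^2}$, and sum using \eqref{eq:Sigma-gamma-bound}. The paper itself only cites this estimate from Selberg's fixed-conductor work and lists structural reasons, so your write-up supplies the actual computation and makes the lemma self-contained from Steps~2--4. Your case split $|t-\gamma|\gtrless\delta$ is not needed, since $\delta^2-u^2\le\delta^2$ and $\int_0^\delta |t-\gamma|\,du/(u^2+(t-\gamma)^2)\le\pi/2$ handle every zero at once.
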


\begin{proof}[Proof.]
The bound~\eqref{eq:strip-lemma-bound} is the fixed-conductor
specialization of the strip-integration estimate established for
primitive Dirichlet $L$-functions in Selberg~\cite{Selberg1946L}; the
$\zeta$-function prototype is the $J_3$ estimate of
Selberg~\cite{Selberg1946} \S 4, and Goldston~\cite{Goldston2004}
\S 10 gives a modern exposition of the latter.  We record the three
structural reasons the fixed-$q$ specialization introduces no new
analytic difficulty over the $\zeta$ case, which together identify
\eqref{eq:strip-lemma-bound} with the fixed-conductor form of
\cite{Selberg1946L}.

First, the smoothed explicit formula for $L'/L$ (Step~1
of the proof of Proposition~\ref{prop:selberg-pointwise-L},
equation~\eqref{eq:smoothed-Lp-over-L}) has the same structure as
Selberg's formula for $\zeta'/\zeta$, with $\Lambda(n)$ replaced by
$\chi(n)\Lambda(n)$, no pole at $s = 1$ (since $\chi$ is non-principal,
$L(s,\chi)$ is entire), and the usual trivial-zero contribution from
the gamma factor.  The conductor and gamma-factor terms are
$O_{q,\chi}(\log(qT))$ uniformly for $t \in [T, 2T]$, which after
multiplication by the strip width $\sigma_x - \tfrac12 = 2/\log x$
produces the $\log(qT)$ term in~\eqref{eq:strip-lemma-bound}.

Second, the $J_3$ calculation is a zero-side strip estimate; its
signed cancellation uses the partial-fractions expansion
\eqref{eq:partial-fractions-L} for $L'/L$ and the location of the
zeros.  Under GRH all nontrivial zeros of $L(s,\chi)$ lie on
$\Re s = \tfrac12$, so Selberg's off-line case distinctions
$|\beta - \tfrac12| \lessgtr (\sigma_{x,t} - \tfrac12)/2$ collapse to
the on-line case (only the first case occurs).  The absence of
$\gamma \mapsto -\gamma$ symmetry for a non-real character is
irrelevant here: the estimate is local in the ordinates $\gamma$ of
the zeros of this single $L$-function and uses only the
partial-fractions kernel at $\Re s = \tfrac12$.

Third, the arithmetic term produced by the smoothed explicit formula
at $\sigma_x$, after the
$|x^{\rho - \sigma_x - it}| = e^{-2}$ trick of Step~3
(yielding~\eqref{eq:Sigma-gamma-bound}), is precisely the auxiliary
polynomial $\mathcal{Z}_\chi(t)$.  Selberg's strip-integration bound
in the form of \cite{Selberg1946L} thus gives
\[
  \biggl|\int_{1/2}^{\sigma_x}
    \Imag\,\frac{L'}{L}(\sigma+it,\chi)\,d\sigma\biggr|
  \;\ll\;
  (\sigma_x - \tfrac12)\,|\mathcal{Z}_\chi(t)|
  \;+\;
  (\sigma_x - \tfrac12)\,\log(qT),
\]
with constants depending at most on the fixed $q, \chi$.  This is
\eqref{eq:strip-lemma-bound}.
\end{proof}

\noindent\textbf{Step 7.}\enspace
\emph{Assembly into Proposition~\ref{prop:selberg-pointwise-L}.}
Adding the half-line contribution (Step~5) and the strip contribution
(Step~6, via Lemma~\ref{lem:selberg-strip}), and using
$X_\chi(t) = (\text{half-line}) + (\text{strip})$
from~\eqref{eq:X-as-integral}, we obtain
\begin{equation}\label{eq:assembly-final}
  X_\chi(t) \;=\; \Imag\,\mathcal{D}_\chi(t) \;+\; R_\chi(t, x),
\end{equation}
where the real remainder $R_\chi(t, x)$ collects $E^{(\infty)}_\chi(t,x)$
from Step~5 and the strip contribution bounded by
Lemma~\ref{lem:selberg-strip}.  Both contributions have the form
$O((\sigma_x - \tfrac12)(|\mathcal{Z}_\chi(t)| + \log(qT)\cdot
(q,\chi\text{-const})))$, so
\begin{equation}\label{eq:R-bound-derived}
  |R_\chi(t,x)| \;\le\; C_1\,(\sigma_x - \tfrac12)\,|\mathcal{Z}_\chi(t)|
       \;+\; C_2(q,\chi)\,(\sigma_x - \tfrac12)\,\log(qT)
\end{equation}
with $C_1 = 1 + C_3$ absolute and $C_2(q,\chi)$ collecting the $C_q$
and $C_4(q,\chi)$ contributions.  Since $\sigma_x - \tfrac12 = 2/\log x$,
we have $(\sigma_x - \tfrac12)\log(qT) = 2\log(qT)/\log x$, so the
$\log(qT)$ piece equivalently reads $C_2'(q,\chi)\log(qT)/\log x$ ---
matching the form stated in~\eqref{eq:R-chi-bound}.

The pointwise inequality~\eqref{eq:Xchi-2k-pointwise} then follows
from $|\Imag z| \le |z|$ applied to $\mathcal{D}_\chi(t)$ and the
elementary inequality $|a + b + c|^{2k} \le 3^{2k}(|a|^{2k} + |b|^{2k}
+ |c|^{2k})$ applied with $a = \mathcal{D}_\chi(t)$,
$b = C_1(\sigma_x - \tfrac12)|\mathcal{Z}_\chi(t)|$,
$c = C_2(q,\chi)\log(qT)/\log x$.  The threshold $T_S(q,\chi)$ is chosen
large enough that the cumulative $O_q(1)$ contributions from the
trivial-zero sum, the Mertens difference $\sum_{p \mid q} 1/p$, and
the bookkeeping in Lemma~\ref{lem:selberg-strip} are all dominated by
$\log(qT)/\log x$.
\end{proof}

\begin{remark}[Citation scope]
\label{rem:proof-scope}
The proof above derives Steps~1--5 explicitly and quotes the strip
integration (Step~6) from the fixed-conductor strip-integration
estimate of Selberg~\cite{Selberg1946L} via
Lemma~\ref{lem:selberg-strip}; it relies on the structural inputs
(i) the smoothed explicit formula~\eqref{eq:smoothed-Lp-over-L} for
$L'/L$ (Davenport~\cite{Davenport2000} \S 17, the standard
$\chi$-port of the Selberg--von Mangoldt smoothed identity);
(ii) the standard completed-$L$ partial-fractions
form~\eqref{eq:partial-fractions-L} (Davenport~\cite{Davenport2000}
\S 12, eqns.\ (5)--(7), or the $\chi$-port of
Selberg~\cite{Selberg1946} Lemma~11);
(iii) the Riemann--von Mangoldt zero-counting identity at fixed $q$
(\eqref{eq:RvM}; see Davenport~\cite{Davenport2000} \S 16 for the
classical argument-principle derivation, and
Bennett--Martin--O'Bryant--Rechnitzer~\cite{BMOR2021} Theorem~1.1
for the explicit positive-ordinate zero-counting bound).  All three
are standard for $L(s,\chi)$ at fixed conductor.  Selberg himself gave
the fixed-conductor strip-integration estimate of step~(6)
in~\cite{Selberg1946L}; the present proof of
Proposition~\ref{prop:selberg-pointwise-L} is otherwise an explicit
derivation in the Selberg--Goldston pointwise-formula style
(\cite{Selberg1946} \S 4, \cite{Goldston2004} \S 10), adapted to
fixed-conductor Dirichlet $L$-functions, with the only bookkeeping
change being that the arithmetic sums carry the complex weights
$\chi(n)$.  The constant $C_2(q,\chi)$ in~\eqref{eq:R-chi-bound} is
not tracked; we make no $q$-uniformity claim.
\end{remark}

\section{High moment of the Dirichlet polynomial; proof of Theorem~(M)}
\label{sec:high-moment}

In this section we bound the $2k$-th moments of the two prime-power
Dirichlet polynomials produced by
Proposition~\ref{prop:selberg-pointwise-L}: the principal polynomial
$\mathcal{D}_\chi$ and the log-weighted auxiliary polynomial
$\mathcal{Z}_\chi$.  Lemma~\ref{lem:high-moment} treats
$\mathcal{D}_\chi$, Lemma~\ref{lem:log-weighted-moment} treats
$\mathcal{Z}_\chi$, and \S\ref{sec:main-proof} combines these bounds
with the pointwise three-term decomposition to prove
Theorem~\ref{thm:M}.

\subsection{Soundararajan's mean-value lemma for prime sums}

The engine of the moment bound is the following mean-value inequality
for Dirichlet polynomials over primes with arbitrary complex
coefficients (Montgomery--Vaughan~\cite{MV1974} Corollary~2 followed by
multinomial expansion and the diagonal--off-diagonal split).

\begin{lemma}[Soundararajan~\cite{Soundararajan2009} Lemma~3]
\label{lem:sound-mean-value}
Let $T \ge 3$, $2 \le y \le T$, and $k$ a positive integer with
$y^k \le T/\log T$.  For arbitrary complex $a(p)$ supported on primes
$p \le y$,
\begin{equation}\label{eq:sound-lemma3}
  \int_T^{2T}\!\Bigl|\sum_{p \le y}\frac{a(p)}{p^{1/2 + it}}\Bigr|^{2k}\,dt
    \;\le\; C\, T\, k!\,\Bigl(\sum_{p \le y}\frac{|a(p)|^2}{p}\Bigr)^{\!k},
\end{equation}
with $C$ an absolute constant.
\end{lemma}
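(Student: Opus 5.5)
The plan is to expand the $2k$-th power as a $k$-fold product against its conjugate and then apply the Montgomery--Vaughan mean-value theorem with complex coefficients. Write $A(t) = \sum_{p\le y} a(p)p^{-1/2-it}$. Then
\[
  A(t)^k = \sum_{n} \frac{b_k(n)}{n^{1/2+it}},
  \qquad
  b_k(n) = \sum_{\substack{p_1\cdots p_k = n\\ p_j\le y}} a(p_1)\cdots a(p_k),
\]
and $b_k$ is supported on integers $n\le y^k$ that have exactly $k$ prime factors counted with multiplicity. In particular $|A(t)|^{2k} = |A(t)^k|^2$, so the left side of~\eqref{eq:sound-lemma3} is the mean square of a single Dirichlet polynomial of length at most $y^k$.

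First, apply Montgomery--Vaughan (\cite{MV1974} Corollary~2) in the form
\[
  \int_T^{2T}\Bigl|\sum_n c_n n^{-it}\Bigr|^2 dt = \sum_n (T + O(n))|c_n|^2 .
\]
Here $c_n = b_k(n)/\sqrt n$ and $n\le y^k\le T/\log T\le T$, so the $O(n)$ term is at most $O(T)$. This gives
\[
  \int_T^{2T}|A|^{2k}\,dt \ll T\sum_n \frac{|b_k(n)|^2}{n}.
\]
Because the moduli are taken after forming $b_k$, no realness of $\chi(p)$ or of $a(p)$ is needed. The length condition is the only place the hypothesis $y^k\le T/\log T$ enters; even $y^k\le T$ would suffice.

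Second, the core step is the combinatorial bound
\[
  \sum_n \frac{|b_k(n)|^2}{n} \le k!\Bigl(\sum_{p\le y}\frac{|a(p)|^2}{p}\Bigr)^k .
\]
Write $n=\prod_i p_i^{\alpha_i}$ with $\sum_i\alpha_i=k$. The number of ordered $k$-tuples $(p_1,\dots,p_k)$ with product $n$ is the multinomial coefficient $k!/\prod_i\alpha_i!$, and each such tuple contributes the same product of coefficients. Hence
\[
  |b_k(n)| \le \frac{k!}{\prod_i\alpha_i!}\prod_i|a(p_i)|^{\alpha_i},
  \qquad
  |b_k(n)|^2 \le k!\cdot \frac{k!}{\prod_i\alpha_i!}\prod_i|a(p_i)|^{2\alpha_i},
\]
where the second inequality uses $k!/\prod_i\alpha_i!\le k!$ on one factor. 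Summing over $n$ with weight $1/n=\prod_i p_i^{-\alpha_i}$ turns the remaining factor into exactly the multinomial expansion of $\bigl(\sum_p |a(p)|^2/p\bigr)^k$. So the inequality holds with constant $1$, and the absolute constant $C$ in~\eqref{eq:sound-lemma3} comes only from the Montgomery--Vaughan step.

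The main obstacle, though a mild one, is checking that the repeated-prime terms are handled correctly. The key point is that the diagonal is over integers $n$, not over tuples, so the count of tuples must be the multinomial coefficient. The crude bound $|b_k(n)|^2\le(\#\text{tuples})^2\max|a|^{2k}$ would lose the structure, while the two-line argument above keeps it. I would also note that $a(p)$ may be arbitrary complex, including $\chi(p)p^{-2/\log x}$ or these values times $\log p$. This is what Lemma~\ref{lem:high-moment} and Lemma~\ref{lem:log-weighted-moment} will use later. The statement is exactly \cite{Soundararajan2009} Lemma~3, so citing it also suffices.
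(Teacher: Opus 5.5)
Your proof is correct and takes the same approach as the paper. The paper does not reprove this lemma; it cites Soundararajan's Lemma~3 and describes it as Montgomery--Vaughan followed by multinomial expansion, which is your route: apply the mean-value theorem to $A(t)^k$ (needing only $y^k \le T$), then bound the diagonal using $k!/\prod_i \alpha_i! \le k!$.
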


The original statement is on $[T, 2T]$ but the same proof applies on any
dyadic interval $[T', 2T']$ with $T'$ in place of $T$
(Soundararajan~\cite{Soundararajan2009} \S 2).  We use the dyadic form
on $[2T, 4T]$ for the prime-square contribution below.

For reference we also record the underlying Montgomery--Vaughan mean-value
theorem.

\begin{lemma}[Montgomery--Vaughan~\cite{MV1974} Corollary~2]\label{lem:MV}
For complex $(a_n)_{n \le N}$,
\begin{equation}\label{eq:MV-2T}
  \int_T^{2T}\!\left|\sum_{n \le N}a_n n^{-it}\right|^2 dt
    \;=\; T\sum_{n \le N}|a_n|^2
       \;+\; O\!\Bigl(\sum_{n \le N} n\,|a_n|^2\Bigr).
\end{equation}
\end{lemma}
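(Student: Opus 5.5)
The plan is the classical one: expand the square, keep the diagonal exactly, and control the off-diagonal terms by the Montgomery--Vaughan generalized Hilbert inequality.

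\emph{Diagonal.} Expanding $|\sum_{n\le N}a_n n^{-it}|^2=\sum_{m,n\le N}a_m\bar a_n (n/m)^{it}$ and integrating term by term over $[T,2T]$, the pairs $m=n$ contribute exactly $T\sum_{n\le N}|a_n|^2$. This is the main term of~\eqref{eq:MV-2T}.

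\emph{Off-diagonal.} For $m\ne n$,
\[
  \int_T^{2T}(n/m)^{it}\,dt=\frac{(n/m)^{2iT}-(n/m)^{iT}}{i\log(n/m)}.
\]
The off-diagonal contribution is therefore the difference of two bilinear forms of the shape
\[
  \frac{1}{i}\sum_{m\ne n}\frac{u_m\bar u_n}{\lambda_n-\lambda_m},
  \qquad \lambda_n:=\log n .
\]
In the first form we take $u_n:=a_n n^{-2iT}$, and in the second $u_n:=a_n n^{-iT}$. In both cases $|u_n|=|a_n|$.

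\emph{Hilbert inequality.} We apply the Montgomery--Vaughan generalized Hilbert inequality: for distinct real $\lambda_n$ with separation $\delta_n:=\min_{m\ne n}|\lambda_m-\lambda_n|$,
\[
  \Bigl|\sum_{m\ne n}\frac{u_m\bar u_n}{\lambda_m-\lambda_n}\Bigr|\le \frac{3\pi}{2}\sum_n\frac{|u_n|^2}{\delta_n}.
\]
Here the nearest neighbour of $\log n$ is $\log(n\pm1)$, so
\[
  \delta_n\ge \log\bigl(1+\tfrac1n\bigr)\ge \frac{1}{2n}.
\]
Each bilinear form is then $O(\sum_n n|a_n|^2)$, which is the error term of~\eqref{eq:MV-2T}.

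\emph{Main obstacle.} The only nontrivial input is the Hilbert inequality with weights $1/\delta_n$ and no logarithmic loss. Cruder routes lose a factor: Gallagher's lemma, or Schur's test applied to the kernel $1/(\lambda_m-\lambda_n)$, give $\sum_n n\log n\,|a_n|^2$ instead of $\sum_n n|a_n|^2$. I would therefore quote the inequality directly from~\cite{MV1974}, where it is Theorem~1 and the present statement is its Corollary~2.

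If a self-contained argument were wanted, I would follow Montgomery--Vaughan's proof. One first reduces to the sharp bound with $\delta_n$ replaced by a uniform separation via a dyadic grouping of the $\lambda_n$. The uniform case is then handled by the Selberg/Montgomery--Vaughan majorant trick, which bounds the largest eigenvalue of the Hermitian matrix $\bigl(i/(\lambda_m-\lambda_n)\bigr)_{m\ne n}$ using the identity for $\sum_k 1/\bigl((\lambda_m-\lambda_k)(\lambda_k-\lambda_n)\bigr)$. Only the $O$-form is needed downstream, so the constant $3\pi/2$ is immaterial.
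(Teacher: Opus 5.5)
Your proposal is correct and agrees with the paper. The paper gives no argument of its own and simply quotes Montgomery--Vaughan; your deduction (exact diagonal $T\sum|a_n|^2$, off-diagonal terms written as two bilinear forms with $|u_n|=|a_n|$, then the generalized Hilbert inequality with $\delta_n\ge\log(1+1/n)\ge 1/(2n)$) is the standard way this corollary is obtained from their weighted Hilbert inequality. One caveat on your optional self-contained sketch, to my recollection of their paper: Montgomery--Vaughan do not reach the $\delta_n$-weighted inequality by a dyadic reduction to the uniformly separated case, and a naive dyadic grouping does not control the cross-scale terms without loss; since you quote the inequality rather than reprove it, this does not affect your proof.
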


\subsection{High-moment bound for prime-power Dirichlet polynomials}
\label{subsec:hmom-pp-poly}

\begin{lemma}[High moment of a prime-power polynomial under
Sound's length condition]\label{lem:high-moment}
Let $q$ be fixed.  Let $(a(n))_{n \le y}$ be coefficients supported on
prime powers $n = p^r$ (with $r \ge 1$) satisfying
\begin{equation}\label{eq:hmom-coeff}
  |a(p^r)| \;\le\; \frac{C_0}{r\,p^{r/2}}
\end{equation}
for some absolute $C_0$.  Write
$\mathcal{A}(t) := \sum_{n \le y} a(n)\, n^{-it}$.  Fix $K > 0$.  If
$y^k \le T/\log T$ and $1 \le k \le K\,L_T$, then
\begin{equation}\label{eq:hmom-bound}
  \frac{1}{T}\int_T^{2T}|\mathcal{A}(t)|^{2k}\,dt
    \;\le\; \bigl(C_1(C_0,K)\,k\,L_T\bigr)^{k},
\end{equation}
where the constant $C_1(C_0, K)$ depends only on $C_0$ and $K$, and is
independent of $T, y, k, q, \chi$.  In our application from
Proposition~\ref{prop:selberg-pointwise-L} we will take $C_0 = O(1)$
(depending only on $K$, not on $q,\chi$), and $y = x^3$ where $x$ is
the Selberg parameter; the length condition then reads
$x^{3k} \le T/\log T$ (cf.\ Remark~\ref{rem:length-x3}), which is
satisfied by the choice $x = T^{1/(8k)}$ in \S\ref{sec:main-proof}
(giving $x^{3k} = T^{3/8} \le T/\log T$ for $T$ large).
\end{lemma}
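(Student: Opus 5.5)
Split $\mathcal{A}$ according to prime powers,
\[
  \mathcal{A}(t) = \mathcal{A}_1(t) + \mathcal{A}_2(t) + \mathcal{A}_3(t),
  \qquad
  \mathcal{A}_1 = \sum_{p \le y} a(p)p^{-it},\quad
  \mathcal{A}_2 = \sum_{p^2 \le y} a(p^2)p^{-2it},\quad
  \mathcal{A}_3 = \sum_{\substack{p^r \le y\\ r\ge 3}} a(p^r)p^{-irt}.
\]
The elementary inequality $|a+b+c|^{2k} \le 3^{2k}(|a|^{2k}+|b|^{2k}+|c|^{2k})$ then reduces the claim to bounding each moment separately by $(C\,k\,L_T)^k$. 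The factor $3^{2k}=9^k$ is absorbed into $C_1(C_0,K)^k$.

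\emph{Primes.} Write $\mathcal{A}_1(t)=\sum_{p\le y} b(p)p^{-1/2-it}$ with $b(p)=p^{1/2}a(p)$, so that $|b(p)|\le C_0$. Since $y^k\le T/\log T$, Lemma~\ref{lem:sound-mean-value} applies directly and gives
\[
  \frac1T\int_T^{2T}|\mathcal{A}_1|^{2k}\,dt \le C\,k!\,\Bigl(C_0^2\sum_{p\le y}\frac1p\Bigr)^k .
\]
Mertens gives $\sum_{p\le y}1/p\le \log\log y+O(1)\le L_T+O(1)\le 2L_T$ for large $T$, because $y\le T$. Combined with $k!\le k^k$ this yields $\le C(2C_0^2kL_T)^k$, and $C\le C^k$ is absorbed.

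\emph{Prime squares.} Substitute $u=2t$, so that $\int_T^{2T}|\mathcal{A}_2(t)|^{2k}\,dt=\tfrac12\int_{2T}^{4T}\bigl|\sum_{p\le\sqrt y} a(p^2)p^{-iu}\bigr|^{2k}\,du$. This is a prime polynomial in $u$ with coefficients $b(p)=p^{1/2}a(p^2)$, which satisfy $|b(p)|\le C_0/(2p^{1/2})$. The length condition for the window $[2T,4T]$ holds because $(\sqrt y)^k\le T/\log T$. The dyadic form of Lemma~\ref{lem:sound-mean-value} then bounds the moment by $C\,T\,k!\,(C_0^2\sum_p p^{-2}/4)^k\le T\,(C'k)^k$. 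This is at most $T(C'kL_T)^k$ since $L_T\ge1$.

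\emph{Higher powers.} The bound here is pointwise: $|\mathcal{A}_3(t)|\le\sum_p\sum_{r\ge3}C_0/(rp^{r/2})\le C_0\sum_p 2p^{-3/2}=:B$, an absolute constant multiple of $C_0$. Hence its moment is at most $B^{2k}\le (B^2kL_T)^k$.

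\emph{Main obstacle.} The only delicate point is the bookkeeping in Lemma~\ref{lem:sound-mean-value}. The lemma is normalised with $p^{1/2+it}$, so the coefficients must be renormalised as above. For the prime-square piece one must also check that the length condition and the dyadic window $[2T,4T]$ are compatible. Both checks hold because $\sqrt y\le y$ and $2T\ge T$, so $(\sqrt y)^k\le T/\log T\le 2T/\log(2T)$ for large $T$. Uniformity in $k\le KL_T$ is automatic, since every bound has the shape $(c\,k\,L_T)^k$ or smaller with $c$ depending only on $C_0$. The hypothesis $k\le KL_T$ is in fact not needed for this lemma. Taking $C_1(C_0,K)=9\max(2CC_0^2,C',B^2)$, adjusted for the absolute constant $C$, finishes the proof.
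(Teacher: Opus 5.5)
Your proposal is correct and matches the paper's proof step for step. It uses the same prime / prime-square / higher-power split with $|a+b+c|^{2k}\le 3^{2k}(|a|^{2k}+|b|^{2k}+|c|^{2k})$, and for the primes the same Soundararajan Lemma~3 with renormalised coefficients $p^{1/2}a(p)$ plus Mertens. The prime squares are handled by the same substitution $u=2t$ onto $[2T,4T]$, using the dyadic form of that lemma with the same length check, and the terms with $r\ge 3$ by the same pointwise absolutely convergent bound. Your observation that the hypothesis $k\le KL_T$ is never used is also accurate for the paper's argument, where $C_1(C_0,K)$ in fact does not depend on $K$.
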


\begin{proof}
\textbf{Three-term split.}  Decompose $\mathcal{A}$ into prime,
prime-square, and higher-prime-power contributions:
\begin{equation}\label{eq:three-term-split}
  \mathcal{A}(t) \;=\; P_1(t) \;+\; P_2(t) \;+\; R(t),
\end{equation}
where
\begin{align}
  P_1(t) &\;:=\; \sum_{p \le y} a(p)\, p^{-it},
       \label{eq:P1-def} \\
  P_2(t) &\;:=\; \sum_{p^2 \le y} a(p^2)\, p^{-2it},
       \label{eq:P2-def} \\
  R(t)   &\;:=\; \sum_{r \ge 3,\, p^r \le y} a(p^r)\, p^{-irt}.
       \label{eq:R-def}
\end{align}
By the elementary inequality $|a + b + c|^{2k} \le 3^{2k}(|a|^{2k} +
|b|^{2k} + |c|^{2k})$ applied pointwise in $t$,
\begin{equation}\label{eq:abc-split-2k}
  |\mathcal{A}(t)|^{2k}
    \;\le\; 3^{2k}\bigl(|P_1(t)|^{2k} + |P_2(t)|^{2k}
                        + |R(t)|^{2k}\bigr).
\end{equation}
We bound each piece separately.

\smallskip
\textbf{Prime part $P_1$.}  Write $a(p) = a(p)\sqrt p / p^{1/2}$, so
\[
  P_1(t) \;=\; \sum_{p \le y}\frac{a(p)\sqrt p}{p^{1/2 + it}}.
\]
Apply Lemma~\ref{lem:sound-mean-value} with the complex coefficient
$\widetilde a(p) := a(p)\sqrt p$, noting $|\widetilde a(p)|^2 / p =
|a(p)|^2$, valid since $y^k \le T/\log T$ by hypothesis.  This gives
\begin{equation}\label{eq:P1-Sound}
  \frac{1}{T}\int_T^{2T}|P_1(t)|^{2k}\,dt
   \;\le\; C\, k!\,\Bigl(\sum_{p \le y}|a(p)|^2\Bigr)^{\!k}.
\end{equation}
Since $|a(p)| \le C_0 / \sqrt p$ by~\eqref{eq:hmom-coeff} (the $r=1$
case), $\sum_{p \le y}|a(p)|^2 \le C_0^2 \sum_{p \le y} 1/p \le
C_0^2(L_T + O(1))$ by Mertens' theorem (since $\log\log y \le L_T$ for
$y \le T$).  Stirling's formula $k! \le (k/e)^k \sqrt{2\pi k}\,
e^{1/(12k)} \le (Ck)^k$ then yields, for $T$ large enough that the
$O(1)$ Mertens correction is dominated by $L_T$,
\begin{equation}\label{eq:P1-final}
  \frac{1}{T}\int_T^{2T}|P_1(t)|^{2k}\,dt
   \;\le\; \bigl(C_1' \, C_0^2 \, k\, L_T\bigr)^{k},
\end{equation}
for an absolute constant $C_1'$.

\smallskip
\textbf{Prime-square part $P_2$ via the $2t$ change of variable.}
The sum $P_2(t)$ runs over prime squares $p^2 \le y$, with
$p^{-2it} = (p^2)^{-it}$.  We rewrite this as a sum over the
underlying primes weighted by $p^{-2it}$, and apply the substitution
$u = 2t$:
\begin{equation}\label{eq:P2-substitution}
  \int_T^{2T}|P_2(t)|^{2k}\,dt
   \;=\; \frac{1}{2}\int_{2T}^{4T}
            \Bigl|\sum_{p \le \sqrt y} a(p^2)\, p^{-iu}\Bigr|^{2k}\,du.
\end{equation}
The right-hand side is a Dirichlet polynomial in $u$ over primes
$p \le \sqrt y$, with complex coefficients $b(p) := a(p^2)$.  Apply
Lemma~\ref{lem:sound-mean-value} on the dyadic interval $[2T, 4T]$
with $y \mapsto \sqrt y$, noting that the length condition for the
substituted sum is $(\sqrt y)^k = y^{k/2} \le y^k \le T/\log T \le
2T/\log(2T)$ for $T$ large, so the hypothesis of
Lemma~\ref{lem:sound-mean-value} (in its dyadic form) holds with room
to spare.  Writing $a(p^2) = a(p^2)\sqrt p / p^{1/2}$ and applying
the lemma,
\begin{equation}\label{eq:P2-Sound-dyadic}
  \int_{2T}^{4T}\Bigl|\sum_{p \le \sqrt y}
                     \frac{a(p^2)\sqrt p}{p^{1/2 + iu}}\Bigr|^{2k}\,du
   \;\le\; C\,(2T)\, k!\,
        \Bigl(\sum_{p \le \sqrt y}|a(p^2)|^2\Bigr)^{\!k}.
\end{equation}
Substituting back into~\eqref{eq:P2-substitution} (and dividing by
$T$),
\begin{equation}\label{eq:P2-final-step}
  \frac{1}{T}\int_T^{2T}|P_2(t)|^{2k}\,dt
   \;\le\; C\, k!\,\Bigl(\sum_{p \le \sqrt y}|a(p^2)|^2\Bigr)^{\!k}.
\end{equation}
Since $|a(p^2)| \le C_0/(2 p)$ by~\eqref{eq:hmom-coeff} (the $r = 2$
case), $\sum_{p \le \sqrt y}|a(p^2)|^2 \le (C_0/2)^2 \sum_p 1/p^2
\ll C_0^2 \zeta(2) = O(C_0^2)$.  Applying Stirling,
\begin{equation}\label{eq:P2-final}
  \frac{1}{T}\int_T^{2T}|P_2(t)|^{2k}\,dt
   \;\le\; \bigl(C_2 \, C_0^2 \, k\bigr)^{k}
   \;\le\; \bigl(C_2 \, C_0^2 \, k\, L_T\bigr)^{k},
\end{equation}
the last inequality using $L_T \ge 1$ for $T$ large.  The $P_2$
contribution is bounded \emph{better} than $P_1$ (no Mertens
log-log factor is needed because the $1/p^2$ tail converges absolutely
without a Mertens-type series).

\smallskip
\textbf{Higher-prime-power tail $R$, pointwise.}  For $r \ge 3$, the
prime-power sum is absolutely convergent in $t$:
\begin{align}
  |R(t)|
   &\;\le\; \sum_{r \ge 3}\sum_{p^r \le y}|a(p^r)|
    \;\le\; C_0\sum_{r \ge 3}\sum_p \frac{1}{r\, p^{r/2}}
    \notag \\
   &\;\le\; \frac{C_0}{3}\sum_p \frac{p^{-3/2}}{1 - p^{-1/2}}
    \;\le\; C_0 \, C''',
   \label{eq:R-pointwise}
\end{align}
where the geometric series in $r$ gives the factor
$p^{-3/2}/(1 - p^{-1/2})$.  Since $1/(1 - p^{-1/2}) \le 1/(1 -
2^{-1/2}) \le 4$ for all primes $p \ge 2$, the sum is bounded by
$\sum_p p^{-3/2}/(1 - p^{-1/2}) \le 4 \sum_p p^{-3/2} \le
4 \zeta(3/2) - 4 < \infty$, with $C'''$ the resulting absolute
constant.  Hence
\begin{equation}\label{eq:R-bound-2k}
  \frac{1}{T}\int_T^{2T}|R(t)|^{2k}\,dt
   \;\le\; (C_0\, C''')^{2k}
   \;=\; \bigl((C_0 C''')^2\bigr)^{k}
   \;\le\; \bigl(C_3\, C_0^2 \, k\, L_T\bigr)^{k},
\end{equation}
the last inequality using $k\, L_T \ge 1$ for $T$ large and $k \ge 1$.
This bound is dominated by the $P_1$ contribution.  No moment
inequality (Sound's lemma, Rosenthal, etc.) is needed for $R$: the
pointwise absolute convergence suffices.

\smallskip
\textbf{Combining the three pieces.}
By~\eqref{eq:abc-split-2k}, \eqref{eq:P1-final}, \eqref{eq:P2-final},
and~\eqref{eq:R-bound-2k},
\begin{align}
  \frac{1}{T}\int_T^{2T}|\mathcal{A}(t)|^{2k}\,dt
   &\;\le\; 3^{2k}\bigl[
       (C_1' C_0^2 \, k L_T)^k
       + (C_2 C_0^2 \, k L_T)^k
       + (C_3 C_0^2 \, k L_T)^k
     \bigr] \notag \\
   &\;\le\; 3 \cdot 9^k \cdot (C_4(C_0)\,k\,L_T)^k
    \;\le\; \bigl(C_1(C_0, K)\,k\,L_T\bigr)^{k},
   \label{eq:hmom-final-combined}
\end{align}
absorbing $3 \cdot 9^k$ into the constant by $3 \cdot 9^k \le 30^k$
for $k \ge 1$ and merging into $C_1(C_0, K) := 30 \cdot C_4(C_0)$.
This proves~\eqref{eq:hmom-bound}.
\end{proof}

\begin{remark}[Two-level $|a+b+c|^{2k}$ structure]
\label{rem:two-level-split}
The three-term split~\eqref{eq:three-term-split} is a second-level
$|a+b+c|^{2k}$ inequality applied within the prime-power Dirichlet
polynomial: the first level
$|a+b+c|^{2k} \le 3^{2k}(|a|^{2k} + |b|^{2k} + |c|^{2k})$ is applied
at the level of Proposition~\ref{prop:selberg-pointwise-L} (yielding
the $\mathcal{D}_\chi + \mathcal{Z}_\chi + (\text{constant})$ pieces),
then again within each prime-power Dirichlet polynomial to split into
primes, prime squares, and higher powers.  The prime contribution is
handled by Soundararajan's Lemma~3 directly (with complex character
weights $a(p) = \chi(p) c(p)$ of absolute value $\le 1/\sqrt p$);
the prime-square contribution
reduces to a sum over primes by the linear change of variable $u = 2t$,
which transforms $p^{-2it}$ into $p^{-iu}$ and the integration interval
$[T,2T]$ into $[2T,4T]$; higher prime powers are pointwise bounded by
an absolutely convergent series since $\sum_p p^{-3/2} < \infty$.
\end{remark}

\begin{remark}[Dependence on $q$ and $\chi$ for Lemma~\ref{lem:high-moment}]
\label{rem:hmom-q-chi-dep}
The constants $C_1', C_2, C_3, C_4, C_1$ above depend on $C_0$ and $K$
only, \emph{not} on $q$ or $\chi$, because the only use of the
character is the bound $|\chi(p)| \le 1$ (and $\chi(p) = 0$ for
$p \mid q$, which only reduces the sums).  In our application from
Proposition~\ref{prop:selberg-pointwise-L}, Lemma~\ref{lem:high-moment}
will be applied to $\mathcal{D}_\chi$, whose coefficients
$\chi(n)\Lambda_x(n)/(n^{\sigma_x}\log n)$ at $n = p^r$ have modulus
$\le 1/(r p^{r/2}) \cdot p^{-2r/\log x} \le 1/(r p^{r/2})$ (the
$\sigma_x$ exponent shifts $1/2$ by $2/\log x = O(1/\log x)$,
contributing only the bounded factor $p^{-2r/\log x} \le 1$); so
$C_0 = 1$ suffices.  The auxiliary polynomial $\mathcal{Z}_\chi$
has \emph{different} coefficient growth (a $\log p$ factor at primes,
no $1/\log n$ damping), and is treated by the separate
log-weighted moment lemma below.
\end{remark}

\subsection{High-moment bound for the log-weighted auxiliary polynomial}
\label{subsec:log-weighted}

The auxiliary polynomial $\mathcal{Z}_\chi(t)$
of~\eqref{eq:Zchi-def} that appears in the
$|R_\chi(t,x)|$-bound~\eqref{eq:R-chi-bound} of
Proposition~\ref{prop:selberg-pointwise-L} has prime coefficients
proportional to $\log p$ rather than $1$; it does \emph{not} satisfy
the hypothesis~\eqref{eq:hmom-coeff} of
Lemma~\ref{lem:high-moment} with $C_0 = O(1)$.  We treat it
separately.

\begin{lemma}[High moment of the log-weighted prime-power polynomial]
\label{lem:log-weighted-moment}
Let $q$ be fixed.  Let $(b(n))_{n \le y}$ be coefficients supported on
prime powers $n = p^r$ (with $r \ge 1$) satisfying
\begin{equation}\label{eq:logwt-coeff}
  |b(p^r)| \;\le\; \frac{C_0\,\log p}{p^{r/2}}.
\end{equation}
Write $\mathcal{B}(t) := \sum_{n \le y} b(n)\, n^{-it}$.  Fix $K > 0$.
If $y^k \le T/\log T$ and $1 \le k \le K\,L_T$, then
\begin{equation}\label{eq:logwt-bound}
  \frac{1}{T}\int_T^{2T}|\mathcal{B}(t)|^{2k}\,dt
    \;\le\; \bigl(C_5(C_0, K)\,k\,(\log y)^2\bigr)^{k},
\end{equation}
where the constant $C_5(C_0, K)$ depends only on $C_0$ and $K$, and is
independent of $T, y, k, q, \chi$.
\end{lemma}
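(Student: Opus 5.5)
I would mirror the proof of Lemma~\ref{lem:high-moment} step by step. The only change is that the prime-coefficient variance is now controlled by $\sum_{p\le y}(\log p)^2/p$ instead of $\sum_{p\le y}1/p$.

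\textbf{Step 1 (split).} Decompose $\mathcal{B}=Q_1+Q_2+Q_3$, where $Q_1$ collects the primes, $Q_2$ the prime squares and $Q_3$ the prime powers with $r\ge3$. The pointwise inequality $|a+b+c|^{2k}\le 3^{2k}(|a|^{2k}+|b|^{2k}+|c|^{2k})$ then reduces the lemma to bounding the three $2k$-th moments separately.

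\textbf{Step 2 (primes).} Write $Q_1(t)=\sum_{p\le y}\widetilde b(p)p^{-1/2-it}$ with $\widetilde b(p)=b(p)\sqrt p$, so that $|\widetilde b(p)|^2/p=|b(p)|^2\le C_0^2(\log p)^2/p$. Lemma~\ref{lem:sound-mean-value} applies because $y^k\le T/\log T$. It gives
\[
\frac1T\int_T^{2T}|Q_1|^{2k}\,dt\le C\,k!\,\Bigl(C_0^2\sum_{p\le y}\frac{(\log p)^2}{p}\Bigr)^k .
\]
By Mertens (partial summation from $\sum_{p\le u}\log p/p=\log u+O(1)$) we have $\sum_{p\le y}(\log p)^2/p\le (\log y)^2$ for $y\ge2$, up to an absolute constant. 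Together with $k!\le (Ck)^k$ this gives the bound $(C\,C_0^2\,k(\log y)^2)^k$.

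\textbf{Step 3 (prime squares).} Substitute $u=2t$ exactly as in \eqref{eq:P2-substitution}, turning $Q_2$ into a prime polynomial over $p\le\sqrt y$ on $[2T,4T]$ with coefficients $b(p^2)$. The dyadic form of Lemma~\ref{lem:sound-mean-value} applies, since $y^{k/2}\le 2T/\log(2T)$. The resulting variance is $\sum_{p\le\sqrt y}|b(p^2)|^2\le C_0^2\sum_p(\log p)^2/p^2=O(C_0^2)$. Using $(\log y)^2\ge(\log2)^2$, this contribution is at most $(C\,C_0^2\,k(\log y)^2)^k$.

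\textbf{Step 4 (higher powers).} This is the one place where the argument is not simply a copy, because the coefficients no longer carry the factor $1/r$. Nevertheless
\[
\sum_p\sum_{r\ge3}\frac{\log p}{p^{r/2}}\le 4\sum_p\frac{\log p}{p^{3/2}}<\infty,
\]
so $|Q_3(t)|\le C_0C'''$ pointwise. Hence the $2k$-th moment of $Q_3$ is at most $(C_0C''')^{2k}\le (C\,C_0^2\,k(\log y)^2)^k$.

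\textbf{Step 5 (combine).} Combining as in \eqref{eq:hmom-final-combined} and absorbing $3\cdot9^k\le30^k$ yields \eqref{eq:logwt-bound}.

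The main obstacle is only the bookkeeping in Step~2: one must confirm that the variance comes out as $(\log y)^2$, with no factor $L_T$. In particular no uniformity in $k$ beyond $k!\le(Ck)^k$ is needed, and the dependence on $K$ is in fact vacuous.
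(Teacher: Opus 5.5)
Your proposal is correct and follows the paper's proof essentially step for step: the same prime/prime-square/higher-power split, Soundararajan's Lemma~3 with $\widetilde b(p)=b(p)\sqrt p$ and the Mertens-type bound $\sum_{p\le y}(\log p)^2/p\ll(\log y)^2$ for the primes, the $u=2t$ substitution onto $[2T,4T]$ for the prime squares, and the pointwise bound via $\sum_p \log p/p^{3/2}<\infty$ for $r\ge3$. Your remark that $C_5$ does not really depend on $K$ matches the paper's argument, which never uses $k\le KL_T$ here. Incidentally, in this log-weighted setting the prime-square piece is already pointwise $\le C_0\sum_{p\le\sqrt y}\log p/p\ll C_0\log y$, so the $u=2t$ step is not strictly needed.
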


\begin{proof}
Apply the same three-term split as in
Lemma~\ref{lem:high-moment}'s proof:
$\mathcal{B}(t) = Q_1(t) + Q_2(t) + S(t)$ with
$Q_1, Q_2, S$ the analogues of $P_1, P_2, R$ over primes,
prime squares, higher prime powers respectively.

\smallskip\textbf{Prime part $Q_1$.}
$Q_1(t) = \sum_{p \le y}b(p)\,p^{-it}$.  Apply
Lemma~\ref{lem:sound-mean-value} with $\widetilde a(p) = b(p)\sqrt p$,
giving $|\widetilde a(p)|^2/p = |b(p)|^2$.  Since
$|b(p)| \le C_0 \log p / \sqrt p$,
$\sum_{p \le y}|b(p)|^2 \le C_0^2 \sum_{p \le y} (\log p)^2 / p$.  By
partial summation against the prime number theorem
($\sum_{p \le y}(\log p)/p = \log y + O(1)$, so by partial summation
$\sum_{p \le y}(\log p)^2/p = \tfrac12 (\log y)^2 + O(\log y)$), we have
\begin{equation}\label{eq:logwt-mertens}
  \sum_{p \le y}\frac{(\log p)^2}{p}
    \;=\; \frac{(\log y)^2}{2} \;+\; O(\log y)
    \;\le\; (\log y)^2,
\end{equation}
for $y \ge y_0$ (large absolute).  After enlarging $C_5(C_0, K)$,
the finitely many cases $2 \le y < y_0$ are absorbed as well.  Lemma~\ref{lem:sound-mean-value}
together with Stirling $k! \le (Ck)^k$ yields
\begin{equation}\label{eq:Q1-final}
  \frac{1}{T}\int_T^{2T}|Q_1(t)|^{2k}\,dt
   \;\le\; \bigl(C_5'\,C_0^2\,k\,(\log y)^2\bigr)^{k}.
\end{equation}

\smallskip\textbf{Prime-square part $Q_2$.}
By the $u = 2t$ substitution (as in~\eqref{eq:P2-substitution}),
\[
  \int_T^{2T}|Q_2(t)|^{2k} dt
   \;=\; \tfrac12 \int_{2T}^{4T}
         \Bigl|\sum_{p \le \sqrt y} b(p^2)\,p^{-iu}\Bigr|^{2k}\,du.
\]
Apply Lemma~\ref{lem:sound-mean-value} on $[2T, 4T]$.  Since
$|b(p^2)| \le C_0 \log p / p$, the variance is
$\sum_{p \le \sqrt y}|b(p^2)|^2 \le C_0^2 \sum_p (\log p)^2/p^2 = O(C_0^2)$
(absolutely convergent).  Hence
\begin{equation}\label{eq:Q2-final}
  \frac{1}{T}\int_T^{2T}|Q_2(t)|^{2k}\,dt
   \;\le\; \bigl(C_5'' C_0^2 \, k\bigr)^{k}
   \;\le\; \bigl(C_5'' C_0^2\, k\,(\log y)^2\bigr)^{k},
\end{equation}
the last inequality using $\log y \ge 1$.

\smallskip\textbf{Higher-prime-power tail $S$.}
For $r \ge 3$,
\[
  |S(t)| \;\le\; C_0\sum_{r \ge 3}\sum_p \frac{\log p}{p^{r/2}}
   \;\le\; C_0\sum_p \frac{\log p}{p^{3/2}(1 - p^{-1/2})}
   \;\le\; C_0\,C''',
\]
since $\sum_p (\log p)/p^{3/2} < \infty$ and $1/(1 - p^{-1/2}) \le
1/(1 - 2^{-1/2}) \le 4$ for $p \ge 2$.  Hence
\[
  \frac{1}{T}\int_T^{2T}|S(t)|^{2k}\,dt
   \;\le\; (C_0 C''')^{2k}
   \;\le\; \bigl(C_5''' C_0^2 \, k\, (\log y)^2\bigr)^{k}.
\]

\smallskip\textbf{Combining.}
By $|a + b + c|^{2k} \le 3^{2k}(|a|^{2k} + |b|^{2k} + |c|^{2k})$ and
the three displays,
\begin{equation}\label{eq:logwt-final-combined}
  \frac{1}{T}\int_T^{2T}|\mathcal{B}(t)|^{2k}\,dt
   \;\le\; \bigl(C_5(C_0, K)\,k\,(\log y)^2\bigr)^{k},
\end{equation}
absorbing $3^{2k}\cdot 3$ into the constant.
\end{proof}

\begin{remark}[How Lemma~\ref{lem:log-weighted-moment} feeds into Theorem~(M)]
\label{rem:logwt-cancellation}
In the application of \S\ref{sec:main-proof}, $\mathcal{Z}_\chi(t)$
appears multiplied by the prefactor $(\sigma_x - \tfrac12) =
2/\log x$ in the bound~\eqref{eq:R-chi-bound}.  Raising to the
$2k$-th power and integrating,
\[
  \bigl(\sigma_x - \tfrac12\bigr)^{2k} \cdot
  \frac{1}{T}\!\int_T^{2T}\!|\mathcal{Z}_\chi(t)|^{2k}\,dt
   \;\le\; \bigl(2/\log x\bigr)^{2k}\cdot
            \bigl(C_5\, k\,(\log y)^2\bigr)^{k}.
\]
With $y = x^3$, $\log y = 3\log x$, and the $(2/\log x)^{2k}$ prefactor
contributes $(4/\log^2 x)^k$.  So
\[
  (4/\log^2 x)^k \cdot (C_5\,k\,(3\log x)^2)^k
   \;=\; (4 \cdot 9 \cdot C_5 \, k)^k
   \;=\; (36 C_5 \, k)^k
   \;\le\; (36 C_5\, k\,L_T)^k,
\]
the last inequality using $L_T \ge 1$ for $T$ large.  This is at the
same scale as the $\mathcal{D}_\chi$ contribution
$(C_1(1, K) k L_T)^k$ of Lemma~\ref{lem:high-moment}.  The
prefactor-cancellation arithmetic is exactly correct: the
$(\log y)^{2k}$ growth of the log-weighted variance is wiped out by the
$(2/\log x)^{2k}$ Selberg prefactor.
\end{remark}

\subsection{Proof of Theorem~(M)}
\label{sec:main-proof}

Fix $q \ge 3$ squarefree odd, $\chi \bmod q$ primitive non-principal,
and $K > 0$.  Assume GRH for $L(s, \chi)$.  Let
$T \ge T_0(K, q, \chi) \ge \max(T_S(q, \chi), 3)$, with $T_0$ chosen
large enough that $L_T \ge 1$ and the $O_{q,\chi}(1)$ errors below
are dominated by $k$.  Fix an integer $k$ with $1 \le k \le K L_T$,
and choose the Selberg parameter
\begin{equation}\label{eq:final-params}
  x \;=\; T^{1/(8k)}.
\end{equation}

\smallskip
\emph{Verifications of hypotheses.}
\begin{itemize}
\item $\log x = \log T/(8k) \ge \log T/(8K L_T) \to \infty$, so
  $\log x \ge 4$ for $T$ large (hypothesis of
  Proposition~\ref{prop:selberg-pointwise-L}).
\item $x \le T$: trivially, since $x = T^{1/(8k)} \le T$ for $k \ge 1/8$.
\item Length condition for Lemmas~\ref{lem:high-moment}
  and~\ref{lem:log-weighted-moment} with $y = x^3$:
  $y^k = x^{3k} = T^{3/8}$, and $T^{3/8} \le T/\log T$ holds for
  $T$ large.  So both lemmas apply.
\item Selberg-prefactor magnitude: $\sigma_x - \tfrac12 = 2/\log x =
  16k/\log T$, satisfying $(\sigma_x - \tfrac12)\log x = 2$ exactly
  (the prefactor-cancellation arithmetic of
  Remark~\ref{rem:logwt-cancellation}).
\item Constant-error magnitude: from~\eqref{eq:R-chi-bound},
\[
  \frac{C_2(q,\chi)\log(qT)}{\log x}
  =
  8k\,C_2(q,\chi)\frac{\log(qT)}{\log T}
  =
  O_{q,\chi}(k)
\]
at fixed $q$.
\end{itemize}

\smallskip
\emph{Assembly.}
By the pointwise inequality~\eqref{eq:Xchi-2k-pointwise} of
Proposition~\ref{prop:selberg-pointwise-L}, for every $t \in [T, 2T]$:
\begin{equation}\label{eq:assembly-2k}
  |X_\chi(t)|^{2k}
   \;\le\; 3^{2k}\!\Bigl[\,
     |\mathcal{D}_\chi(t)|^{2k}
     \;+\; \bigl(C_1(\sigma_x - \tfrac12)\bigr)^{2k}
                |\mathcal{Z}_\chi(t)|^{2k}
     \;+\; \bigl(C_2(q,\chi)\log(qT)/\log x\bigr)^{2k}
   \,\Bigr].
\end{equation}
Integrating against $dt/T$ over $[T, 2T]$ produces three integrals
$\mathcal{I}_\mathcal{D}$, $\mathcal{I}_\mathcal{Z}$,
$\mathcal{I}_{\text{cst}}$, where
\[
  \mathcal{I}_\mathcal{D} \;:=\; \frac{1}{T}\!\int_T^{2T}\!|\mathcal{D}_\chi(t)|^{2k}\,dt,
  \qquad
  \mathcal{I}_\mathcal{Z} \;:=\; \frac{1}{T}\!\int_T^{2T}\!|\mathcal{Z}_\chi(t)|^{2k}\,dt,
\]
and $\mathcal{I}_{\text{cst}} := (C_2(q,\chi)\log(qT)/\log x)^{2k}$
(the constant-error contribution is independent of $t$).  We bound
each piece.

\smallskip
\textbf{Bound on $\mathcal{I}_\mathcal{D}$.}
Write
\[
  \mathcal{D}_\chi(t) \;=\; \sum_{n < x^3} a(n)\,n^{-it},
  \qquad
  a(n) \;:=\; \frac{\chi(n)\,\Lambda_x(n)}{n^{\sigma_x}\,\log n}.
\]
At $n = p^r$ ($r \ge 1$),
\[
  |a(p^r)|
   \;=\; \frac{|\chi(p^r)|\,\Lambda(p^r)}{p^{r\sigma_x}\,\log(p^r)}
   \;\le\; \frac{\log p}{p^{r/2}\cdot p^{2r/\log x}\cdot r\log p}
   \;=\; \frac{1}{r\cdot p^{r/2}\cdot p^{2r/\log x}}
   \;\le\; \frac{1}{r\,p^{r/2}},
\]
since $|\chi(p^r)| \le 1$, $\Lambda(p^r) = \log p$, $\log(p^r) =
r\log p$, $p^{r\sigma_x} = p^{r/2}\cdot p^{2r/\log x}$, and
$p^{2r/\log x} \ge 1$.  Hence the
hypothesis~\eqref{eq:hmom-coeff} of Lemma~\ref{lem:high-moment} holds
with $C_0 = 1$, and direct application gives
\begin{equation}\label{eq:ID-bound}
  \mathcal{I}_\mathcal{D}
    \;\le\; \bigl(C_1^{\mathcal{D}}(K)\,k\,L_T\bigr)^{k},
\end{equation}
where $C_1^{\mathcal{D}}(K) := C_1(1, K)$ is the constant from
Lemma~\ref{lem:high-moment} at $C_0 = 1$.

\smallskip
\textbf{Bound on $\mathcal{I}_\mathcal{Z}$.}
The polynomial $\mathcal{Z}_\chi(t) = \sum_{n \le x^3}
\chi(n)\Lambda_x(n)/n^{\sigma_x+it}$ has prime-power coefficients
that do not satisfy the hypothesis of Lemma~\ref{lem:high-moment}
(they carry an extra $\log p$ factor at primes), so
we apply Lemma~\ref{lem:log-weighted-moment} directly to
$\mathcal{Z}_\chi$ in the form $\sum_n c(n) n^{-it}$ with
$c(n) := \chi(n)\Lambda_x(n)/n^{\sigma_x}$ (no $n^{1/2}$ rescaling).
At $n = p^r$,
\[
  |c(p^r)| \;=\; \Lambda(p^r) \cdot p^{-r\sigma_x}
   \;=\; \log p \cdot p^{-r/2 - 2r/\log x}
   \;\le\; \frac{\log p}{p^{r/2}},
\]
which is exactly the hypothesis~\eqref{eq:logwt-coeff} of
Lemma~\ref{lem:log-weighted-moment} with $C_0 = 1$.  Hence
\begin{equation}\label{eq:IZ-bound-raw}
  \mathcal{I}_\mathcal{Z}
   \;\le\; \bigl(C_5(1, K)\,k\,(\log y)^2\bigr)^{k}
   \;=\; \bigl(C_5(1, K)\,k\,(3\log x)^2\bigr)^{k}
   \;=\; \bigl(9\, C_5(1, K)\,k\,(\log x)^2\bigr)^{k},
\end{equation}
since $y = x^3$.

Now $\mathcal{I}_\mathcal{Z}$ enters~\eqref{eq:assembly-2k} with the
prefactor $(C_1(\sigma_x - \tfrac12))^{2k} = (C_1 \cdot 2/\log x)^{2k}
= (2C_1)^{2k}/(\log x)^{2k} = (4 C_1^2)^k/(\log x)^{2k}$.  Combining
with~\eqref{eq:IZ-bound-raw},
\begin{equation}\label{eq:IZ-with-prefactor}
  \bigl(C_1(\sigma_x - \tfrac12)\bigr)^{2k}\,
  \mathcal{I}_\mathcal{Z}
   \;\le\; \frac{(4 C_1^2)^k}{(\log x)^{2k}}
           \cdot \bigl(9 C_5\, k\, (\log x)^2\bigr)^{k}
   \;=\; \bigl(36\, C_1^2\, C_5\,k\bigr)^{k}
   \;\le\; \bigl(36 C_1^2 C_5\,k\,L_T\bigr)^{k},
\end{equation}
the last inequality using $L_T \ge 1$ for $T$ large.  The
$(\log x)^{2k}$ growth from $\mathcal{I}_\mathcal{Z}$'s log-weighted
variance is exactly cancelled by the $1/(\log x)^{2k}$ Selberg
prefactor.  This is the
prefactor-cancellation arithmetic announced in
Remark~\ref{rem:logwt-cancellation}.

\smallskip
\textbf{Bound on $\mathcal{I}_{\text{cst}}$.}
The constant-error contribution is deterministic:
\begin{equation}\label{eq:Icst-bound}
  \mathcal{I}_{\text{cst}}
    \;=\; \bigl(C_2(q,\chi)\log(qT)/\log x\bigr)^{2k}
    \;=\; \bigl(8k\,C_2(q,\chi)\log(qT)/\log T\bigr)^{2k}
    \;=\; \bigl(O_{q,\chi}(k)\bigr)^{2k}.
\end{equation}
Since $k \le K L_T$, $(O_{q,\chi}(k))^{2k} = (O_{q,\chi}(k^2))^k \le
(O_{q,\chi}(K) \cdot k L_T)^k$.  So
\begin{equation}\label{eq:Icst-bound-final}
  \mathcal{I}_{\text{cst}}
    \;\le\; \bigl(C_6(K, q, \chi)\,k\,L_T\bigr)^{k}.
\end{equation}

\smallskip
\textbf{Combination.}
By~\eqref{eq:assembly-2k}, \eqref{eq:ID-bound},
\eqref{eq:IZ-with-prefactor}, and~\eqref{eq:Icst-bound-final},
\begin{align}
  \frac{1}{T}\int_T^{2T}|X_\chi(t)|^{2k}\,dt
   &\;\le\; 3^{2k}\cdot 3 \cdot
       \bigl(C_K^*(K, q, \chi)\,k\,L_T\bigr)^{k} \notag \\
   &\;=\; 3 \cdot 9^k \cdot \bigl(C_K^*\,k\,L_T\bigr)^{k}
   \;\le\; \bigl(C_K\,k\,L_T\bigr)^{k},
   \label{eq:X-moment-final}
\end{align}
where $C_K^*(K, q, \chi) := \max(C_1^{\mathcal{D}}(K),\
36 C_1^2 C_5(1, K),\ C_6(K, q, \chi))$ absorbs the three pieces,
and $C_K := 30 \cdot C_K^*$ absorbs the $3 \cdot 9^k \le 30^k$
multiplicative factor.  This is the bound $(C_K\,k\,L_T)^k$ of
Theorem~\ref{thm:M}, with $C_K$ depending on $K$, $q$, $\chi$ (with
$q$ fixed throughout).
\qed

\section{A Gaussian-tail corollary in the Selberg range}
\label{sec:gaussian-tail}

Theorem~\ref{thm:M} immediately yields a Gaussian-tail bound for
$|X_\chi(t)|$ via Markov's inequality, in the range
$\sqrt{L_T} \ll V \ll L_T$.  This is the most direct corollary of
independent interest.

\begin{corollary}[Gaussian-tail bound, $\sqrt{L_T} \ll V \ll L_T$]
\label{cor:gaussian-tail}
Let $\chi$ be a primitive non-principal Dirichlet character mod $q \ge
3$, $q$ fixed squarefree odd.  Assume GRH for $L(s, \chi)$.  Fix
$K > 0$ and let $C_K = C_K(K, q, \chi)$ and $T_0 = T_0(K, q, \chi)$ be
as in Theorem~\ref{thm:M}.  Then for every $T \ge T_0$ and every
$V \in \mathbb{R}$ satisfying
\begin{equation}\label{eq:V-range}
  \sqrt{e\,C_K\,L_T} \;\le\; V \;\le\; \sqrt{e\,C_K\,K}\,\cdot L_T,
\end{equation}
the following Gaussian-tail bound holds:
\begin{equation}\label{eq:gaussian-tail}
  \frac{1}{T}\,\meas\!\bigl\{t \in [T, 2T] : |X_\chi(t)| > V\bigr\}
   \;\le\; \exp\!\Bigl(-\frac{V^2}{2\,e\,C_K\,L_T}\Bigr).
\end{equation}
\end{corollary}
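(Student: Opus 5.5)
The plan is to apply Markov's inequality to the $2k$-th moment from Theorem~\ref{thm:M} and then choose $k$ to optimize the bound. For any integer $k$ with $1 \le k \le K L_T$, Theorem~\ref{thm:M} gives
\[
  \frac{1}{T}\meas\{t\in[T,2T]: |X_\chi(t)|>V\}
  \;\le\; \frac{1}{V^{2k}}\cdot\frac{1}{T}\int_T^{2T}|X_\chi(t)|^{2k}\,dt
  \;\le\; \Bigl(\frac{C_K\,k\,L_T}{V^2}\Bigr)^{k}.
\]
The natural choice makes the base equal to $e^{-1}$, i.e.\ $k_0 = V^2/(e\,C_K L_T)$. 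With $k = k_0$ the bound would be $e^{-k_0} = \exp(-V^2/(eC_KL_T))$. Since $k$ must be an integer, we instead take $k = \lfloor k_0\rfloor$.

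Next we check that this $k$ is admissible. The lower endpoint of \eqref{eq:V-range}, $V \ge \sqrt{eC_KL_T}$, gives $k_0 \ge 1$, so $k \ge 1$. The upper endpoint, $V \le \sqrt{eC_KK}\,L_T$, gives $k_0 \le V^2/(eC_KL_T) \le K L_T$, so $k \le KL_T$ and Theorem~\ref{thm:M} applies for every $T \ge T_0$.

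With $k = \lfloor k_0 \rfloor \le k_0$, the base satisfies $C_K k L_T/V^2 \le k/(e k_0) \le e^{-1}$, so the Markov bound is at most $e^{-k}$. Since $k_0 \ge 1$, we have $\lfloor k_0\rfloor \ge k_0/2$, hence $e^{-k} \le e^{-k_0/2} = \exp(-V^2/(2eC_KL_T))$. This is exactly \eqref{eq:gaussian-tail}, and the factor $2$ in the exponent is precisely the loss from rounding $k_0$ down.

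The only delicate point is this integrality and rounding step. It is where the factor $\tfrac12$ and the lower limit of the $V$-range enter, the latter ensuring $k\ge1$ and $\lfloor k_0\rfloor\ge k_0/2$. No other input beyond Theorem~\ref{thm:M} is needed.
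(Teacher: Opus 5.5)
Your proposal is correct and is essentially the paper's proof. You use Markov's inequality on the $2k$-th moment from Theorem~\ref{thm:M}, take $k=\lfloor k_0\rfloor$ with $k_0=V^2/(eC_KL_T)$ (admissible exactly because of the $V$-range), bound $C_KkL_T/V^2=k/(ek_0)\le e^{-1}$, and pay the rounding loss $\lfloor k_0\rfloor\ge k_0/2$. The only difference is cosmetic: the paper splits into the cases $1\le k^\star<2$ (taking $k=1$) and $k^\star\ge2$, whereas your observation that $\lfloor k_0\rfloor\ge k_0/2$ for every $k_0\ge1$ handles both at once.
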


\begin{proof}
By Markov's inequality applied to $|X_\chi(t)|^{2k}$,
\begin{equation}\label{eq:markov}
  \frac{1}{T}\,\meas\!\bigl\{t \in [T, 2T] : |X_\chi(t)| > V\bigr\}
   \;\le\; \frac{1}{V^{2k}}\cdot
        \frac{1}{T}\!\int_T^{2T}|X_\chi(t)|^{2k}\,dt
   \;\le\; \Bigl(\frac{C_K\,k\,L_T}{V^2}\Bigr)^{k},
\end{equation}
where the second inequality is Theorem~\ref{thm:M}, valid for any
integer $k$ with $1 \le k \le K\,L_T$.

Optimize the right-hand side of~\eqref{eq:markov} in $k$.  Let
\[
  f(k)
  :=
  k\log\bigl(C_K k L_T / V^2\bigr)
  =
  k\log(C_K k L_T)-2k\log V .
\]
Differentiating in $k$ (treating $k$ as a real variable for the
moment) gives
\[
  f'(k)=\log(C_K k L_T)+1-2\log V,
\]
which vanishes at
\begin{equation}\label{eq:k-star}
  k^\star \;=\; \frac{V^2}{e\,C_K\,L_T}.
\end{equation}
The condition $k^\star \ge 1$ is exactly the lower bound
$V \ge \sqrt{e\,C_K\,L_T}$ in~\eqref{eq:V-range}; the condition
$k^\star \le K\,L_T$ is the upper bound $V \le \sqrt{e\,C_K\,K}\cdot L_T$.

By the range assumption~\eqref{eq:V-range}, $1 \le k^\star \le K\,L_T$.
We split into two cases.

\emph{Case 1: $1 \le k^\star < 2$.}  Choose $k = 1$.  Then
$C_K\,k\,L_T/V^2 = 1/(e k^\star) \le 1/e \le e^{-k^\star/2}$, so
$(C_K k L_T/V^2)^k \le e^{-k^\star/2}$.

\emph{Case 2: $k^\star \ge 2$.}  Choose $k = \lfloor k^\star \rfloor$,
so $1 \le k \le K\,L_T$ and $k \ge k^\star/2$.  Then
$C_K\,k\,L_T/V^2 = k/(e k^\star) \le 1/e$, hence
$(C_K k L_T/V^2)^k \le e^{-k} \le e^{-k^\star/2}$.

In both cases,
\begin{equation}\label{eq:two-case-bound}
  \Bigl(\frac{C_K\,k\,L_T}{V^2}\Bigr)^{k}
   \;\le\; \exp\!\Bigl(-\frac{k^\star}{2}\Bigr)
   \;=\; \exp\!\Bigl(-\frac{V^2}{2\,e\,C_K\,L_T}\Bigr).
\end{equation}
Combining~\eqref{eq:two-case-bound} with~\eqref{eq:markov} proves
\eqref{eq:gaussian-tail}.
\end{proof}

\begin{remark}[Range of validity and comparison with Arguin--Bailey]
\label{rem:gaussian-tail-comparison}
The validity range~\eqref{eq:V-range} is the {\it Selberg range}
$\sqrt{L_T} \ll V \ll L_T$.  Outside this range:
\begin{itemize}
\item For $V = o(\sqrt{L_T})$: the optimal $k^\star < 1$ falls below
the integer range, and Markov's inequality with $k = 1$ alone gives
the trivial $V^{-2}\cdot O(L_T)$ bound, which is weaker than Gaussian.
\item For $V \gg L_T$: the optimal $k^\star > K\,L_T$ falls above the
Selberg range $1 \le k \le K\,L_T$ that Theorem~\ref{thm:M} permits;
beyond $V \sim L_T$ the moment method requires moments of order
$k > K\,L_T$ which Theorem~\ref{thm:M} does not control.
\end{itemize}

For comparison, the unconditional sharp bound of
Arguin--Bailey~\cite{ArguinBailey2022} (their Theorem~1.1, for
$\log|\zeta|$ on the critical line, $V \sim \alpha\log\log T$ with
$0 < \alpha < 2$) gives
\begin{equation}\label{eq:AB-bound}
  \frac{1}{T}\,\meas\!\bigl\{t \in [T, 2T] : \log|\zeta(\tfrac12+it)| > V\bigr\}
   \;\ll\; \frac{1}{\sqrt{\log\log T}}\,
        \exp\!\Bigl(-\frac{V^2}{\log\log T}\Bigr).
\end{equation}
Our Corollary~\ref{cor:gaussian-tail} differs from~\eqref{eq:AB-bound}
in three respects: (i) we treat $\Imag \log L(\tfrac12+it,\chi)$ at
fixed primitive non-principal $q,\chi$ rather than $\log|\zeta|$;
(ii) our bound is conditional on GRH for $L(s,\chi)$, whereas
\eqref{eq:AB-bound} is unconditional; (iii) our coefficient of
$V^{2}/L_T$ in the exponent is $1/(2 e C_K)$ rather than the sharp
$1$, and the bound lacks the $1/\sqrt{L_T}$ prefactor of
\eqref{eq:AB-bound}.  Both of these losses are real: matching the
sharp prefactor and exponent coefficient would require a recursive
multi-scale argument in the spirit of \cite{ArguinBailey2022},
ported to $\Imag\log L$ at fixed $q$ under GRH.  We leave this
sharper problem to future work.
\end{remark}

\renewcommand{\refname}{Bibliography}
\begingroup
\sloppy
\hbadness=2000

\endgroup

\bigskip
\noindent
Scott D.\ Hughes\\
Independent researcher\\
\texttt{v@deltagray.com}

\end{document}